\documentclass[11pt, a4paper]{amsart}
\usepackage{mathbbol}
\usepackage{amsmath, amsthm, amssymb}
\usepackage{txfonts, mathrsfs}
\usepackage[utf8]{inputenc}
\usepackage[usenames]{color}
\usepackage{graphicx}

\newtheorem*{thm*}{Theorem}

\newtheorem{thm}{Theorem}[section]
\newcommand{\bt}{\begin{thm}}
\newcommand{\et}{\end{thm}}

\newtheorem{cor}[thm]{Corollary}
\newcommand{\bc}{\begin{cor}}
\newcommand{\ec}{\end{cor}}

\newtheorem{lem}[thm]{Lemma}
\newcommand{\bl}{\begin{lem}}
\newcommand{\el}{\end{lem}}

\newtheorem{prop}[thm]{Proposition}
\newcommand{\bp}{\begin{prop}}
\newcommand{\ep}{\end{prop}}

\newtheorem{defn}[thm]{Definition}
\newcommand{\bd}{\begin{defn}}      
\newcommand{\ed}{\end{defn}}

\newtheorem{rmrk}[thm]{Remark}
\newcommand{\br}{\begin{rmrk}}
\newcommand{\er}{\end{rmrk}}

\newtheorem{quest}[thm]{Question}
\newcommand{\bq}{\begin{quest}}
\newcommand{\eq}{\end{quest}}

\newcommand{\N}{\mathbb{N}}

\newcommand{\R}{\mathbb{R}}
\newcommand{\Z}{\mathbb{Z}}

\newcommand{\relmiddle}[1]{\mathrel{}\middle#1\mathrel{}}

\newdimen\vintkern\vintkern12pt
\def\vint{-\kern-\vintkern\int}

\newcommand{\hm}{{\mathcal H}}

\newcommand{\dist}{\operatorname{dist}}
\newcommand{\diam}{\operatorname{diam}}
\newcommand{\trace}{\operatorname{tr}}
\newcommand{\length}{\ell}
\newcommand{\Area}{\operatorname{Area}}
\newcommand{\Ar}{\operatorname{Ar}}

\newcommand{\md}{\operatorname{md}}
\newcommand{\lip}{\operatorname{Lip}}

\newcommand{\deltalip}{\delta^{\operatorname{Lip}}}
\newcommand{\fillarealip}{{\operatorname{Fill\,Area}^{\operatorname{Lip}}}}
\newcommand{\fillarea}{{\operatorname{Fill\,Area}}}

\newcommand{\jac}{{\mathbf J}}
\newcommand{\ap}{\operatorname{ap}}
\newcommand{\apmd}{\ap\md}

\begin{document}

\title[]{Dehn functions and Hölder extensions in asymptotic cones}

\author{Alexander Lytchak}

\keywords{}

\address
  {Mathematisches Institut\\ Universit\"at K\"oln\\ Weyertal 86 -- 90\\ 50931 K\"oln, Germany}
\email{alytchak@math.uni-koeln.de}

\author{Stefan Wenger}

\address
  {Department of Mathematics\\ University of Fribourg\\ Chemin du Mus\'ee 23\\ 1700 Fribourg, Switzerland}
\email{stefan.wenger@unifr.ch}

\author{Robert Young}

\address{Courant Institute of Mathematical Sciences\\
  New York University\\
  251 Mercer St.\\
  New York, NY  10012\\
  USA}
\email{ryoung@cims.nyu.edu}
\date{\today}

\thanks{S.~W.~was partially supported by Swiss National Science Foundation Grant 153599.}
\thanks{R.~Y.~was partially supported by a Discovery Grant from the Natural Sciences and Engineering Research Council of Canada, 
  by a Sloan Research Fellowship, and by NSF grant DMS-1612061.}

\begin{abstract}
  The Dehn function measures the area of minimal discs that fill closed curves in a space; it is an important invariant in analysis, geometry, and geometric group theory.  There are several equivalent ways to define the Dehn function, varying according to the type of disc used.  In this paper, we introduce a new definition of the Dehn function and use it to prove several theorems.  First, we generalize the quasi-isometry invariance of the Dehn function to a broad class of spaces.  Second, we prove H\"older extension properties for spaces with quadratic Dehn function and their asymptotic cones.  Finally, we show that ultralimits and asymptotic cones of spaces with quadratic Dehn function also have quadratic Dehn function.  The proofs of our results rely on recent existence and regularity results for area-minimizing Sobolev mappings in metric spaces.
\end{abstract}

\maketitle

\section{Introduction and statement of main results}

Isoperimetric inequalities and other filling functions describe the area of minimal surfaces bounded by curves or surfaces.  Such minimal surfaces are important in geometry and analysis, and filling functions are particularly important in geometric group theory, where the Dehn function is a fundamental invariant of a group.  The Dehn function, which measures the maximal area of a minimal surface bounded by curves of at most a given length, is connected to the difficulty of solving the word problem, and its asymptotics help describe the large-scale geometry of a group.

In this paper, we study spaces with a local or global quadratic isoperimetric inequality.  The class of spaces that admit a global quadratic isoperimetric inequality includes not only spaces that satisfy a nonpositive curvature condition (such as $\delta$--hyperbolicity or the ${\rm CAT}(0)$ condition), but also spaces with a mix of positive and negative curvature, such as the higher-dimensional Heisenberg groups and many solvable Lie groups.  

Roughly speaking, spaces with a quadratic isoperimetric inequality tend to have many two-dimensional discs.  Many techniques for proving that a space has a quadratic isoperimetric inequality show that in fact the space is Lipschitz 1--connected, i.e., that curves of length $L$ are the boundaries of discs with Lipschitz constant of order $L$.  It is an open question whether a quadratic isoperimetric inequality is equivalent to Lipschitz 1--connectedness.  One way to approach this question is to study ultralimits and asymptotic cones of spaces with quadratic isoperimetric inequalities.  While Lipschitz 1--connectedness passes to asymptotic cones and ultralimits, it is an open question whether an asymptotic cone or ultralimit of a space with quadratic isoperimetric inequality also has a quadratic isoperimetric inequality.  

In \cite{Pap96} it was shown that every asymptotic cone of a finitely presented group with a quadratic Dehn function is simply connected. In \cite{Wen11-nilp} it was shown that a homological quadratic isoperimetric inequality (using metric currents) is stable under taking ultralimits and asymptotic cones. This was used in \cite{Wen11-nilp} to produce the first examples of nilpotent groups whose Dehn function does not grow exactly like a polynomial function. A weaker version of the stability was also used in \cite{Wen08-sharp} to exhibit the smallest isoperimetric constant in a quadratic isoperimetric inequality at large scales which implies that the underlying space is Gromov hyperbolic.

The aim of this article is to prove several facts about spaces with a quadratic isoperimetric inequality.  First, we will generalize the fact that the Dehn function of a manifold or simplicial complex is quasi-isometry invariant to broader classes of spaces (Theorems~\ref{thm:qi-invariance-Lip-Dehn} and \ref{thm:qi-invariance-Sobolev-Dehn}).  Second, we will show that quadratic Dehn functions are nearly stable under ultralimits (Theorem~\ref{thm:stab-isop-Lip}).  Third, we will prove Hölder extension properties for spaces with quadratic Dehn functions (Theorems~\ref{thm:Lip-Hoelder-ext-best-exponent} and \ref{thm:isop-Hoelder-ext}).  

For the convenience of the geometrically minded reader we will first formulate our results using Lipschitz maps, but the proofs rely on a generalization of the Dehn function based on Sobolev maps with values in metric spaces.  In manifolds, simplicial complexes, or other spaces that are Lipschitz 1--connected up to some scale, this Dehn function is equal to the usual Dehn function based on Lipschitz maps (Proposition~\ref{prop:Sobolev-Lip-Dehn-equal}), but this definition is better suited to some analytic arguments.  For example, we will show that an ultralimit of spaces with quadratically bounded Dehn functions has a quadratically bounded Sobolev Dehn function (Theorem~\ref{thm:stab-isop-intro}).

\subsection{The Lipschitz Dehn function and quasi-isometry invariance}

Let $(X,d)$ be a complete metric space, and let $D$ denote the open unit disc in $\R^2$ and $\overline{D}$ its closure. The (parameterized Hausdorff) area of a Lipschitz map $u\colon \overline{D}\to X$ is defined by 
\begin{equation}\label{eq:intro-def-area-Lip}
 \Area(u) = \int_X\#\{z\mid u(z) = x\} \,d\hm^2(x),
\end{equation}
 where $\hm^2$ denotes the $2$-dimensional Hausdorff measure on $X$. In particular, if $u$ is injective then $\Area(u)=\hm^2(u(D))$. If $X$ is a Riemannian manifold or simplicial complex with piecewise Riemannian metric then our definition of area coincides with the usual one which one obtains by integrating the Jacobian of the derivative of $u$, see Section~\ref{sec:prelim}. The  Lipschitz filling area of a Lipschitz curve $c\colon S^1\to X$ is given by
\begin{equation*}
 \fillarealip(c):= \inf\left\{\Area(v)\relmiddle| \text{$v\colon \overline{D}\to X$ is Lipschitz, $v|_{S^1}=c$}\right\}.
\end{equation*}
Finally, the Lipschitz Dehn (or isoperimetric) function $\deltalip_X$ of $X$ is defined by
$$\deltalip_X(r)=\sup\left\{\fillarealip(c)\relmiddle| \text{$c\colon S^1\to X$ Lipschitz, $\length(c)\leq r$}\right\}$$ for every $r>0$, where $\length(c)$ denotes the length of $c$. Thus, $\deltalip_X$ measures how the filling area of a curve depends on its length. It is important to notice that $\deltalip_X(r)$ provides an upper bound on the area of suitable fillings of a given curve of length at most $r$ but does not give control on the Lipschitz constants of such fillings. 

There are several other notions of isoperimetric functions commonly used in the setting of large scale geometry and geometric group theory. One of them is the coarse isoperimetric function $\Ar_{X, \varepsilon}(r)$ introduced by Gromov \cite{GroAII}. Roughly speaking, this function measures how difficult it is to partition a given closed Lipschitz curve into closed curves of length at most $\varepsilon$. For the precise definition of $\Ar_{X,\varepsilon}$ we refer to Section~\ref{sec:prelim} below, see also e.g.~\cite{Dru02} or \cite{BrH99}. 

Our first result below shows that under mild conditions on the metric space $X$ the functions $\deltalip_X(r)$ and $\Ar_{X,\varepsilon}(r)$ have the same asymptotic growth as $r\to\infty$. In order to state our theorem, we recall the following definition. A metric space $X$ is called Lipschitz $1$-connected up to some scale if there exist $\lambda_0>0$ and $L\geq 1$ such that every $\lambda$-Lipschitz curve $c\colon S^1\to X$ with $\lambda<\lambda_0$ extends to an $L\lambda$-Lipschitz map on $\overline{D}$. If the above holds with $\lambda_0=\infty$ then $X$ is called Lipschitz $1$-connected. Spaces that are Lipschitz $1$-connected up to some scale include Riemannian manifolds, finite dimensional simplicial complexes, and Alexandrov spaces when  equipped with a cocompact group action by isometries, as well as ${\rm CAT}(\kappa)$-spaces. 

\bt\label{thm:coarse-isop-equals-Lip-isop}
 Let $X$ be a locally compact, geodesic metric space. If $X$ is Lipschitz $1$-connected up to some scale and satisfies $\deltalip_X(r)<\infty$ for all $r>0$ then $\deltalip_X\simeq \Ar_{X,\varepsilon}$ for all $\varepsilon>0$.
\et

For the precise definition of the equivalence relation $\simeq$ for functions, see Section~\ref{sec:prelim}. 

The assumptions on $\deltalip_X$ made in the theorem are not restrictive. Indeed, every locally compact, geodesic metric space $Z$ which satisfies $\Ar_{Z,\varepsilon_0}(r)<\infty$ for some $\varepsilon_0>0$ and all $r>0$ is quasi-isometric to a space $X$ satisfying the hypotheses of Theorem~\ref{thm:coarse-isop-equals-Lip-isop}. In fact, $X$ can be obtained by suitably thickening up $Z$ and $Z$ then even embeds isometrically into $X$, see Section~\ref{sec:Sobolev-fill-fns}. In particular, for such $X$ we have $\deltalip_X\simeq \Ar_{X,\varepsilon}\simeq \Ar_{Z,\varepsilon}$ for every $\varepsilon\geq \varepsilon_0$.

It is well-known and easy to prove that the coarse isoperimetric function is a quasi-isometry invariant. Thus, Theorem~\ref{thm:coarse-isop-equals-Lip-isop} implies the quasi-isometry invariance of the Lispchitz Dehn function as well.

\bt\label{thm:qi-invariance-Lip-Dehn}
 Let $X$ and $Y$ be locally compact, geodesic metric spaces such that $\deltalip_X(r)<\infty$ and $\deltalip_Y(r)<\infty$ for all $r>0$. If $X$ and $Y$ are quasi-isometric and Lipschitz $1$-connected up to some scale then $\deltalip_X\simeq\deltalip_Y$.
\et

Theorems~\ref{thm:coarse-isop-equals-Lip-isop} and \ref{thm:qi-invariance-Lip-Dehn} generalize results in \cite{Alo90}, \cite[10.3.3]{ECHLPT92}, \cite{Bri02}, and \cite{BT02}, which proved similar results for Riemannian manifolds and simplicial complexes.  Our proof is most similar to that of Bridson in \cite{Bri02}, which proves that a geometric version of the Dehn function is equivalent to a coarse version by using a characterization of minimal surfaces in a manifold.  

\subsection{Spaces with quadratic Lipschitz Dehn function}

We now turn to metric spaces whose Lipschitz Dehn function has at most quadratic growth. A complete metric space $X$ is said to admit a quadratic isoperimetric inequality with respect to Lipschitz maps if there exists $C\geq 0$ such that $\deltalip_X(r)\leq C\cdot r^2$ for all $r\geq 0$. Any complete metric space $X$ that is Lipschitz $1$-connected admits a quadratic isoperimetric inequality.  These include Banach spaces, complete ${\rm CAT}(0)$ spaces, and, more generally, spaces with a convex geodesic bicombing. The higher Heisenberg groups (with a left-invariant Riemannian or Carnot-Carath\'eodory distance) are also Lipschitz $1$-connected.  

More generally, every complete length space $X$ that satisfies $\Ar_{X,\varepsilon}(r)\preceq r^2$ for some $\varepsilon>0$ is quasi-isometric to a complete length space $Y$ which admits a quadratic isoperimetric inequality with respect to Lipschitz maps, see Proosition~\ref{prop:quad-isop-admissible}. If $X$ is locally compact then $Y$ can be chosen locally compact as well. See Section~\ref{sec:prelim} for the definition of the relation $\preceq$.  Consequently, if $X$ is the universal cover of a closed Riemannian manifold or compact simplicial complex $M$, then $X$ admits a quadratic isoperimetric inequality with respect to Lipschitz maps if and only if $\pi_1(M)$ has a quadratic Dehn function.  

We first study H\"older extension properties of spaces with a quadratic isoperimetric inequality. Let $Z$ and $X$ be metric spaces. A map $\varphi \colon Z\to X$ is called $(\nu,\alpha)$-H\"older continuous, where $\nu\geq 0$ and $0<\alpha\leq 1$, if $$d_X(\varphi(z), \varphi(z'))\leq \nu\cdot d_Z(z,z')^\alpha$$ for all $z,z'\in Z$.  When $\alpha=1$, this is equivalent to Lipschitz continuity.

\bd
 Let $0<\alpha\leq 1$. A pair $(Z,X)$ of metric spaces $Z$ and $X$ is said to have the $\alpha$-H\"older extension property if there exists $L\geq 1$ such that every $(\nu, \alpha)$-H\"older map $\varphi\colon A\to X$ with $A\subset Z$ and $\nu>0$ admits an $(L\nu,\alpha)$-H\"older extension $\bar{\varphi}\colon Z\to X$.
\ed

If $X$ is a complete metric space and the pair $(\R^2, X)$ has the $1$-H\"older (thus Lipschitz) extension property then $X$ admits a quadratic isoperimetric inequality with respect to Lipschitz maps. The following theorem gives an "almost" converse.

\bt\label{thm:Lip-Hoelder-ext-best-exponent}
 Let $X$ be a locally compact, geodesic metric space admitting a quadratic isoperimetric inequality with respect to Lipschitz maps. Then the pair $(\R^2, X)$ has the $\alpha$-H\"older extension property for every $\alpha\in(0,1)$.
\et

For every $0<\alpha \leq 1$, the $\alpha$-H\"older extension property for $(\R^2, X)$ is stable under taking ultralimits (Corollary~\ref{cor:Hoelder-ext-property-ultralimit}).  We thus obtain a strengthening of Papasoglu's result \cite{Pap96} that every asymptotic cone of a finitely presented group with quadratic Dehn function is simply connected.

\bc\label{cor:Lip-asymp-simply-connected}
 Let $X$ be a locally compact, geodesic metric space admitting a quadratic isoperimetric inequality with respect to Lipschitz maps. Then $X$ is simply connected. Moreover, every asymptotic cone $X_\omega$ of $X$ is simply connected and the pair $(\R^2, X_\omega)$ has the $\alpha$-H\"older extension property for every $\alpha\in(0,1)$.
\ec
More generally, these two results will hold for metric spaces admitting a quadratic isoperimetric inequality with respect to Sobolev maps; see Section~\ref{sec:intro-sobolev}.

% This generalizes Papasoglu's result \cite{Pap96} that every asymptotic cone of a finitely presented group with quadratic Dehn function is simply connected.
For sufficiently small $\alpha\in(0,1)$, only depending on the isoperimetric constant, we produce $\alpha$-H\"older extensions with additional properties, see Theorem~\ref{thm:isop-Hoelder-ext}. 

Our next result yields almost-stability of a quadratic isoperimetric inequality under taking asymptotic cones.

% \bt\label{thm:stab-isop-Lip}
% Let $X$ be a locally compact, geodesic metric space which admits a quadratic isoperimetric inequality with respect to Lipschitz maps. Let $X_\omega$ be an asymptotic cone of $X$ and let $\varepsilon>0$. Then $X_\omega$ is $(1,\varepsilon)$-quasi-isometric to a complete length space $Y$ admitting a quadratic isoperimetric inequality with respect to Lipschitz maps.  Furthermore, the constant in the isoperimetric inequality of $Y$ only depends on that of $X$ and not on $\varepsilon$.
% \et

\bt\label{thm:stab-isop-Lip}
Let $X$ be a locally compact, geodesic metric space such that $\deltalip_X(r)\le Cr^2$ for all $r>0$.  Let $X_\omega$ be an asymptotic cone of $X$ and let $\varepsilon>0$. Then $X_\omega$ is $(1,\varepsilon)$-quasi-isometric to a complete length space $Y$ such that $\deltalip_X(r)\le C'r^2$ for all $r>0$, where $C'$ is a constant depending only on $C$.
\et

See Section~\ref{sec:prelim} for the definition of an $(1,\varepsilon)$-quasi-isometry. 
It is an open question whether $X_\omega$ itself admits a quadratic isoperimetric inequality with respect to Lip\-schitz maps, but we will see that it admits a slightly weaker version of the quadratic isoperimetric inequality based on Sobolev maps (see Theorem~\ref{thm:stab-isop-intro}).

As an application of Theorem~\ref{thm:stab-isop-Lip} we obtain the following special case of the main result of \cite{Wen08-sharp}. Unlike the proof in \cite{Wen08-sharp} our arguments do not rely on the theory of currents in metric spaces. 

\bt\label{thm:isop-Gromov-hyp-Lip}
 Let $X$ be a locally compact, geodesic metric space and $\varepsilon, r_0>0$. If every Lip\-schitz curve $c\colon S^1\to X$ with $\length(c)\geq r_0$ extends to a Lipschitz map $v\colon \overline{D}\to X$ with $$\Area(v)\leq \frac{1-\varepsilon}{4\pi}\cdot  \length(c)^2$$ then $X$ is Gromov hyperbolic.
\et

Theorem~\ref{thm:stab-isop-Lip} can also be used to recover results in \cite{Wen11-nilp} (whose proofs relied on the theory of metric currents) concerning super-quadratic lower bounds for the growth of the Dehn function of certain Carnot groups, see Theorem~\ref{thm:super-quad-growth} below. This result plays a role in the construction of nilpotent groups whose Dehn function do not grow exactly polynomially, see \cite{Wen11-nilp}.

\subsection{Sobolev filling functions}\label{sec:intro-sobolev}
The proofs of the results above are based on the theory of Sobolev mappings from a Euclidean domain to a complete metric space $X$ and our recent results on the existence and regularity of area-minimizing discs in proper metric spaces \cite{LW15-Plateau}, \cite{LW-intrinsic}, \cite{LW15-harmonic}.  These results let us connect Gromov's coarse isoperimetric function $\Ar_{X, \varepsilon}(r)$ to a filling function based on Sobolev maps.  

There exist various equivalent definitions of Sobolev mappings with values in metric spaces, see \cite{KS93}, \cite{Res97}, and Section~\ref{sec:prelim} below and the references therein. 
%We briefly recall the basic definitions. For $p>1$ the Sobolev space $W^{1,p}(D, X)$ is the space of measurable and essentially separably valued maps $u\colon D\to X$ for which there exists a non-negative function $h\in L^p(D)$ with the following property: for every $x\in X$ the function $u_x(z):= d(x, u(z))$ belongs to the classical Sobolev space $W^{1,p}(D)$ and its weak gradient satisfies $|\nabla u_x|\leq h$ almost everywhere in $D$.  
For $p>1$ the space of $p$-Sobolev maps from $D$ to $X$ will be denoted by $W^{1,p}(D, X)$.  Elements of $W^{1,p}(D, X)$ are equivalence classes of maps from $D$ to $X$, so they are defined only up to sets of measure zero.  Every Lipschitz map is $p$-Sobolev for every $p>1$, but a Sobolev map need not even have a continuous representative.  

Nevertheless, if $u\in W^{1,2}(D, X)$, then one can define a parameterized Hausdorff area of $u$, denoted by $\Area(u)$, see Section~\ref{sec:prelim}. This coincides with \eqref{eq:intro-def-area-Lip} when $u$ is Lipschitz. Moreover, if $u\in W^{1,p}(D, X)$, then there is a map $\trace(u)\in L^p(S^1,X)$, called the trace of $u$, such that if $u$ has a continuous extension $\hat{u}$ to the closed unit disc $\overline{D}$, then $\trace(u)=\hat{u}|_{S^1}$.

We can thus define a variant of the Lipschitz Dehn function $\deltalip_X(r)$ by replacing the Lipschitz filling area $\fillarealip(c)$ of a Lipschitz curve $c$ by its Sobolev variant $$\fillarea(c):= \inf\left\{\Area(v)\relmiddle| v\in W^{1,2}(D, X), \trace(v) = c\right\}.$$ Using this definition of filling area we obtain a Sobolev variant of the Dehn function which we denote by $\delta_X(r)$ and call the Dehn function of $X$, see Section~\ref{sec:Sobolev-fill-fns}. 

A complete metric space $X$ is said to admit a quadratic isoperimetric inequality (with respect to Sobolev maps) if there exists $C$ such that $\delta_X(r) \leq C\cdot r^2$ for all $r>0$. We will usually omit the phrase ``with respect to Sobolev maps'' in the sequel if there is no danger of ambiguity.  A key property of Sobolev maps is that this notion is stable under taking ultralimits and asymptotic cones.

\bt\label{thm:stab-isop-intro}
Let $C>0$, let $(X_n)$ be a sequence of locally compact, geodesic metric spaces, and let $X$ be an ultralimit of $(X_n)$.  If $\delta_{X_n}(r) \le Cr^2$ for all $r>0$, then $\delta_{X}(r) \le Cr^2$ for all $r>0$.
\et

\bc\label{cor:main-asymptotic-cones}
 Let $X$ be a locally compact, geodesic metric space. If $X$ admits a quadratic isoperimetric inequality with constant $C$ then so does every asymptotic cone of $X$. 
\ec

An analogous result for homological quadratic isoperimetric inequalities was proved in \cite{Wen11-nilp} using the theory of integral currents in metric spaces \cite{AK00}.  We mention that the isoperimetric fillings of curves in the ultralimit which we construct can moreover be taken to be globally H\"older continuous and having Lusin's property (N), that is, they send sets of Lebesgue measure zero to sets of Hausdorff $2$-measure zero.

Though $\delta_X(r)$ and $\deltalip_X(r)$ are equal in many spaces, the general relationship between the two functions is unclear.  We clearly have $\delta_X(r)\le \deltalip_X(r)$ for every complete metric space $X$, but since Sobolev maps need not have continuous representatives it is a priori not clear that a space admitting a quadratic isoperimetric inequality is even simply connected.  However, we will show that if $\delta_X(r)\le Cr^2$ for all sufficiently small $r$, then $\Ar_{X, \varepsilon}\simeq \delta_X$ for any $\varepsilon>0$ (Theorem~\ref{thm:coarse-isop-equals-Sobolev-isop}).  Similarly, if $X$ is Lipschitz $1$-connected up to some scale, then $\delta_X(r) = \deltalip_X(r)$ (Proposition~\ref{prop:Sobolev-Lip-Dehn-equal}).  These equivalences will imply Theorems~\ref{thm:coarse-isop-equals-Lip-isop}, \ref{thm:qi-invariance-Lip-Dehn}, and \ref{thm:stab-isop-Lip}.

A particular class of spaces to which the results above apply is the class of complete geodesic metric spaces $X$ which are Ahlfors $2$-regular, linearly locally contractible, and homeomorphic to $\R^2$ or $S^2$. Such spaces are of importance in various contexts, see e.g.~\cite{BK02}. It follows from the results in \cite{LW-param} that every such space $X$ admits a quadratic isoperimetric inequality (with respect to Sobolev maps).  By Theorem~\ref{thm:mesh-fct-Hoelder-ext}, every such $X$ is $\alpha$--Hölder 1--connected for every $\alpha\in (0,1)$, and thus the pair $(\R^2, X)$ has the $\alpha$-H\"older extension property. It is not known whether every such space is Lipschitz $1$-connected.

\subsection{Outline}

In Section~\ref{sec:prelim}, we will recall some definitions and facts about Sobolev maps, isoperimetric inequalities, and ultralimits.  In Sections~\ref{sec:Sobolev-fill-fns} and \ref{sec:bound-filling-mesh}, we will prove the equivalence of the Dehn function, the Lipschitz Dehn function, and the coarse isoperimetric function, showing Theorems~\ref{thm:coarse-isop-equals-Lip-isop}  and \ref{thm:qi-invariance-Lip-Dehn}.  Then, in Section~\ref{sec:stab-isop}, we show that the Dehn function is stable under ultralimits and asymptotic cones, proving Theorems~\ref{thm:stab-isop-Lip}--\ref{thm:stab-isop-intro} and Corollary~\ref{cor:main-asymptotic-cones}.  Finally, in Sections~\ref{sec:Hoelder-Sobolev}--\ref{sec:appendix}, we prove extension results for Sobolev and Hölder maps, including Theorem~\ref{thm:Lip-Hoelder-ext-best-exponent} and  Corollary~\ref{cor:Lip-asymp-simply-connected}.

\section{Preliminaries}\label{sec:prelim}

\subsection{Basic notation and definitions}

The Euclidean norm of a vector $v\in\R^n$ will be denoted by $|v|$. The unit circle in $\R^2$ with respect to the Euclidean norm is denoted by $$S^1:= \{z\in\R^2\mid |z|=1\}$$ and will be endowed with the Euclidean metric unless otherwise stated. The open unit disc in $\R^2$ is denoted by $$D:=\{z\in\R^2\mid |z|<1\},$$ its closure by $\overline{D}$.

Let $(X,d)$ be a metric space. A curve in $X$ is a continuous map $c\colon I\to X$, where $I$ is an interval or $S^1$. If $I$ is an interval then the length of $c$ is defined by $$\length(c):= \sup\left\{\sum_{i=0}^{k-1}d(c(t_i), c(t_{i+1}))\relmiddle| \text{ $t_i\in I$ and $t_0<t_1<\dots < t_k$}\right\}$$ and an analogous definition applies in the case $I=S^1$.
The space $X$ is proper if every closed ball of finite radius in $X$ is compact. For $\lambda\geq 1$ the space $X$ is $\lambda$-quasi-convex if any two points $x,y\in X$ can be joined by a curve of length at most $\lambda\cdot d(x,y)$. If $\lambda=1$ then $X$ is called geodesic. A metric space which is $\lambda$-quasi-convex for every $\lambda>1$ is called a length space. A complete, quasi-convex metric space is proper if and only if it is locally compact. 

For $s\geq 0$ the Hausdorff $s$-measure on a given metric space is denoted by $\hm^s$. We choose the normalization constant in such a way that on Euclidean $\R^n$ the Hausdorff $n$-measure coincides with the Lebesgue measure. A map from a subset of $\R^n$ to a metric space is said to satisfy Lusin's property (N) if it sends sets of Lebesgue measure zero to sets of Hausdorff $n$-measure zero.

The Lipschitz constant of a map $\varphi\colon X\to Y$ between metric space $X$ and $Y$ is denoted by $\lip(\varphi)$.

\subsection{Metric space valued Sobolev maps}
In this subsection we briefly review the main definitions and results concerning Sobolev maps from a Euclidean domain into a metric space used throughout the present paper.  There exist several equivalent definitions of Sobolev maps from Euclidean domains with values in a metric space, see e.g.~\cite{Amb90}, \cite{KS93}, \cite{Res97}, \cite{Res04}, \cite{Res06}, \cite{HKST01}, \cite{HKST15}, \cite{AT04}. Here, we recall the definition of \cite{Res97} using compositions with real-valued Lip\-schitz functions. We will restrict ourselves to Sobolev maps defined on the open unit disc $D$ of $\R^2$.

Let $(X,d)$ be a complete metric space and $p>1$. We denote by $L^p(D, X)$ the set of measurable and essentially separably valued maps $u\colon D\to X$ such that for some and thus every $x\in X$ the function $$u_x(z):= d(x, u(z))$$ belongs to $L^p(D)$, the classical space of $p$-integrable functions on $D$. 

\bd
 A map $u\in L^p(D, X)$ belongs to the Sobolev space $W^{1,p}(D, X)$ if there exists $h\in L^p(D)$ such that for every $x\in X$ the function $u_x$ belongs to the classical Sobolev space $W^{1,p}(D, X)$ and has weak gradient bounded by $|\nabla u_x|\leq h$ almost everywhere. 
\ed

The Reshetnyak $p$-energy $E_+^p(u)$ of a map $u\in W^{1,2}(D, X)$ is defined by $$E_+^p(u):= \inf\left\{\|h\|_{L^p(D)}^p\;\big|\; \text{$h$ as in the definition above}\right\}.$$

If $u\in W^{1,p}(D, X)$ then there exists a representative $\bar{u}$ of $u$ such that for almost every $v\in S^1$ the curve $t\mapsto \bar{u}(tv)$ with $t\in[1/2, 1)$ is absolutely continuous. The trace of $u$ is defined by $$\trace(u)(v):= \lim_{t\nearrow 1}\bar{u}(tv)$$ for almost every $v\in S^1$. It can be shown that $\trace(u)\in L^p(S^1, X)$, see \cite{KS93}. Clearly, if $u$ has a continuous extension $\hat{u}$ to $\overline{D}$ then $\trace(u)$ is simply the restriction of $\hat{u}$ to ${S^1}$. 

As was shown in \cite{Kar07} and \cite{LW15-Plateau}, every Sobolev map $u\in W^{1,p}(D, X)$ has an approximate metric derivative at almost every point $z\in D$ in the following sense. There exists a unique seminorm on $\R^2$, denoted $\apmd u_z$, such that 
 \begin{equation*}
    \ap\lim_{z'\to z}\frac{d(u(z'), u(z)) - \apmd u_z(z'-z)}{|z'-z|} = 0.
 \end{equation*}
Here, $\ap\lim$ denotes the approximate limit, see \cite{EG92}. If $u$ is Lipschitz then the approximate limit can be replaced by an honest limit. It can be shown, see \cite{LW15-Plateau}, that $$E_+^p(u) = \int_D\mathcal{I}_+^p(\apmd u_z)\,dz,$$ where for a seminorm $s$ on $\R^2$ we have set $\mathcal{I}_+^p(s):= \max\{s(v)^p\mid |v|=1\}$.

The (parameterized Hausdorff) area of a map $u\in W^{1,2}(D, X)$ is defined by $$\Area(u):= \int_D \jac(\apmd u_z)\,dz,$$ where the Jacobian $\jac(\|\cdot\|)$ of a norm $\|\cdot\|$ on $\R^2$ is set to be the Hausdorff $2$-measure in $(\R^2, \|\cdot\|)$ of the Euclidean unit square and $\jac(s):=0$ for a degenerate seminorm $s$ on $\R^2$. If $u\in W^{1,2}(D, X)$ satisfies Lusin's property (N) then $$\Area(u)=  \int_X\#\{z\mid u(z) = x\} \,d\hm^2(x)$$ by the area formula \cite{Kir94}. In particular, if $u$ is injective then $\Area(u) = \hm^2(u(D))$. The area and energy are related by $\Area(u)\leq E_+^2(u)$ for every $u\in W^{1,2}(D,X)$.

The following well-known properties of Sobolev maps with super-critical Sobolev exponent will be used later.

\bp\label{prop:super-critical-Sobolev}
 Let $u\in W^{1,p}(D, X)$ with $p>2$. Then $u$ has a unique representative $\bar{u}\colon\overline{D}\to X$ with the following properties:
 \begin{enumerate}
  \item $\bar{u}$ is $(L, \alpha)$-H\"older continuous on $\overline{D}$ with $\alpha = 1-\frac{2}{p}$ and $L\leq M \left[E_+^p(u)\right]^{\frac{1}{p}}$ for some $M$ depending only on $p$.
  \item $\bar{u}$ has Lusin's property (N).
\end{enumerate}
\ep

\begin{proof}
 The existence of a continuous representative $\bar{u}$ which satisfies property (i) follows from Morrey's inequality, see e.g.~\cite[Proposition 3.3]{LW15-Plateau}. For the proof of statement (ii) we refer for example to Proposition 2.4 in \cite{BMT13}.
\end{proof}

\subsection{The coarse isoperimetric function}\label{sec:coarse-isoperimetric}

We recall the definition of coarse isoperimetric function of a metric space introduced by Gromov \cite{GroAII}. Our definition is a variant of that in \cite[III.H.2.1]{BrH99}. In what follows, a triangulation of $\overline{D}$ is a homeomorphism from $\overline{D}$ to a combinatorial $2$-complex $\tau$ in which every $2$-cell is a $3$-gon. We endow $\overline{D}$ with the induced cell structure from $\tau$. For $i=0,1,2$ the $i$-skeleton of $\tau$ will be denoted $\tau^{(i)}$ and will be viewed as a subset of $\overline{D}$. The $2$-cells of $\tau$ will also be called triangles in $\tau$.

Let $X$ be a length space and $c\colon S^1\to X$ a Lipschitz curve. Let $\varepsilon>0$. An $\varepsilon$-filling of $c$ is a pair $(P, \tau)$ consisting of a triangulation $\tau$ of $\overline{D}$ and a continuous map $P\colon \tau^{(1)}\to X$ such that $P|_{S^1} = c$ and such that $\length(P|_{\partial F})\leq \varepsilon$ for every triangle $F$ in $\tau$. The $\varepsilon$-area of $c$ is defined by 
$$\Ar_\varepsilon(c):= \min\left\{|\tau| \;\big|\; \text{$(P, \tau)$ an $\varepsilon$-filling of $c$}\right\},$$ 
where $|\tau|$ denotes the number of triangles in $\tau$. If no $\varepsilon$-filling of $c$ exists we set $\Ar_\varepsilon(c):= \infty$. The $\varepsilon$-coarse isoperimetric function of $X$ is defined by $$\Ar_{X,\varepsilon}(r):= \sup\{\Ar_\varepsilon(c)\mid \text{$c\colon S^1\to X$ Lipschitz, $\length(c)\leq r$}\}$$ for all $r>0$. If $\varepsilon_0>0$ is such that $\Ar_{X, \varepsilon_0}(r)<\infty$ for all $r>0$ then for any $\varepsilon, \varepsilon'\geq \varepsilon_0$ the functions $\Ar_{X, \varepsilon}$ and $\Ar_{X, \varepsilon'}$ have the same asymptotic growth, that is, $\Ar_{X, \varepsilon}\simeq \Ar_{X, \varepsilon'}$.
Here, for functions $f,g\colon[0,\infty]\to[0,\infty)$ one writes $f\preceq g$ if there exists $C>0$ such that $$f(r)\leq Cg(Cr+C) + Cr + C$$ for all $r\geq 0$, and one writes $f\simeq g$ if $f\preceq g$ and $g\preceq f$.

If $X$ is the Cayley graph of a finitely presented group $\Gamma$ then for every sufficiently large $\varepsilon>0$ we have $\Ar_{X,\varepsilon} \simeq \delta_\Gamma$, where $\delta_\Gamma$ is the Dehn function of $\Gamma$, see \cite[III.H.2.5]{BrH99}.

Let $\lambda\geq 1$ and $\varepsilon\geq 0$. A map $\varphi\colon X\to Y$ between metric spaces $(X, d_X)$ and $(Y, d_Y)$ is a $(\lambda, \varepsilon)$-quasi-isometry if $$\lambda^{-1} d_X(x,x') - \varepsilon \leq d_Y(\varphi(x), \varphi(x'))\leq \lambda d_X(x,x') + \varepsilon$$ for all $x,x'\in X$ and for every $y\in Y$ there exists $x\in X$ with $d_Y(\varphi(x), y)\leq \varepsilon$. If such a map $\varphi$ exists then $X$ and $Y$ are called $(\lambda, \varepsilon)$-quasi-isometric. The coarse isoperimetric function is a quasi-isometry invariant, see e.g.~\cite[III.H.2.5]{BrH99}. More precisely, we have the following:

\bp\label{prop:qi-invariance-coarse-isop}
 Let $X$ and $Y$ be quasi-isometric length spaces. If there exists $\varepsilon_0>0$ such that $\Ar_{X, \varepsilon_0}(r)<\infty$ and $\Ar_{Y, \varepsilon_0}(r)<\infty$ for all $r>0$ then $\Ar_{X,\varepsilon}\simeq\Ar_{Y,\varepsilon}$ for all $\varepsilon\geq\varepsilon_0$.
\ep

\subsection{Ultralimits and asymptotic cones of metric spaces}\label{sec:prelim-ultralimit}

Let $\omega$ be a non-principal ultrafilter on $\N$, that is, $\omega$ is a finitely additive measure on $\N$ such that every subset $A\subset\N$ is $\omega$-measurable with $\omega(A)\in \{0,1\}$ and such that $\omega(\N)=1$ and $\omega(A)=0$ whenever $A$ is finite. 

Let $(Z,d)$ be a compact metric space. For every sequence $(z_n)\subset Z$ there exists a unique point $z\in Z$ such that 
$$\omega(\{n\in\N\mid d(z_n, z)>\varepsilon\}) = 0$$ for every $\varepsilon>0$. We will denote this point $z$ by $\lim\nolimits_\omega z_n$.

Let $(X_n, d_n, p_n)$ be a sequence of pointed metric spaces. A sequence of points $x_n\in X_n$ is called bounded if $$\sup_{n\in\N} d_n(x_n, p_n)<\infty.$$ One defines a pseudo-metric on the set $\tilde{X}$ of bounded sequences by $$\tilde{d}_\omega((x_n), (x'_n)):= \lim\nolimits_\omega d_n(x_n, x'_n).$$ The $\omega$-ultralimit of the sequence $(X_n, d_n, p_n)$ is defined to be the metric space obtained from $\tilde{X}$ by identifying points in $\tilde{X}$ of zero $\tilde{d}_\omega$-distance. We denote this space by $(X_\omega, d_\omega)$. An element of $X_\omega$ will be denoted by $[(x_n)]$, where $(x_n)$ is an element of $\tilde{X}$. A basepoint in $X_\omega$ is given by $p_\omega:= [(p_n)]$. Ultralimits of sequences of pointed metric spaces are always complete, see e.g.~\cite[I.5.53]{BrH99}.

We have the following relationship between pointed Gromov-Hausdorff limits and ultralimits, see \cite[Exercise I.5.52]{BrH99}. If $(X_n, d_n)$ is proper for every $n$ and converges in the pointed Gromov-Hausdorff sense to some metric space $(X_\infty, d_\infty)$ then for every non-principal ultrafilter $\omega$ the ultralimit of $(X_n, d_n, p_n)$ with respect to $\omega$ is isometric to $(X_\infty, d_\infty)$.

%Precisely, let $X_\omega$ be the set of equivalence classes $[(x_n)]$ of bounded sequences $(x_n)$ under the equivalence relation $(x_n)\sim(x'_n)$ if and only if $\tilde{d}_\omega((x_n), (x'_n))=0$. Denote by $d_\omega$ the metric on $X_\omega$ induced by $\tilde{d}_\omega$. The $\omega$-ultralimit of the sequence $(X_n, d_n, p_n)$ is the metric space $X_\omega=(X_\omega, d_\omega)$. It is natural to define a basepoint in $X_\omega$ by $p_\omega:= [(p_n)]$. 

Let $Z$ be a metric space and $\alpha\in(0,1]$. Let $\varphi_n\colon Z\to X_n$ be $(\nu_n,\alpha)$-H\"older maps, $n\in\N$, where  $\nu_n\geq 0$ is uniformly bounded in $n$. 
If the sequence $(\varphi_n)$ is bounded in the sense that
\begin{equation*}
 \sup_{n\in\N} d_n(\varphi_n(z), p_n)<\infty
\end{equation*}
for some and thus every $z\in Z$ then the assignment $z\mapsto [(\varphi_n(z))]$ defines an $(\nu,\alpha)$-H\"older map from $Z$ to $X_\omega$ with $\nu= \lim\nolimits_\omega \nu_n$. We denote this map by $(\varphi_n)_\omega$ or $\lim\nolimits_\omega \varphi_n$. 

It follows that if $X_n$ is $\lambda_n$-quasi-convex for every $n$ and $\lambda_n$ is uniformly bounded then $X_\omega$ is $\lambda$-quasi-convex with $\lambda= \lim\nolimits_\omega \lambda_n$. In particular, ultralimits of sequences of length spaces are geodesic.

%\bl
% If $X_n$ is $\lambda_n$-quasi-convex for every $n$ and $\lambda_n$ is uniformly bounded then $X_\omega$ is $\lambda$-quasi-convex with $\lambda= \lim\nolimits_\omega \lambda_n$. 
%\el
%
%It follows, in particular, that ultralimits of sequences of length spaces are geodesic.
%
%\begin{proof}
% Let $x=[(x_n)]$ and $x'=[(x'_n)]$ be two points in $X_\omega$. By quasi-convexity there exists for every $n\in\N$ a curve $c_n\colon[0,1]\to X_n$ from $x_n$ to $x'_n$ which is $\nu_n$-Lipschitz with $\nu_n:= \lambda_n d_n(x_n, x'_n)$.  Since $\nu_n$ is uniformly bounded it follows that the sequence $(c_n)$ is bounded in the sense above. Thus the map $c:= \lim\nolimits_\omega c_n$ from $[0,1]$ to $X_\omega$ is Lipschitz with constant $\lambda d_\omega(x,x')$ and satisfies $c(0)=x$ and $c(1)=x'$.
%\end{proof}

Let $(X,d)$ be a metric space, $(p_n)\subset X$ a sequence of basepoints and $(r_n)$ a sequence of positive real numbers satisfying $\lim_{n\to\infty} r_n=0$. Fix a non-principal ultrafilter $\omega$ on $\N$. The asymptotic cone of $X$ with respect to $(p_n)$, $(r_n)$, and $\omega$ is the ultralimit of the sequence $(X, r_nd, p_n)$ with respect to $\omega$. It will be denoted by $(X, r_n, p_n)_\omega$ or simply by $X_\omega$ if there is no danger of ambiguity.

\subsection{Lipschitz and H\"older maps to ultralimits}\label{sec:Lip-Hoelder}

Recall the definition of the $\alpha$-H\"older extension property for a pair $(Z,X)$ of metric spaces $Z$ and $Y$ from the introduction. When $\alpha=1$ the $\alpha$-H\"older extension property is known as the Lipschitz extension property and has been well-studied, see e.g.~\cite{Bru12} and references therein.
If a metric space $X$ is such that the pair $(\R, X)$ has the Lipschitz extension property with constant $L=\lambda$ then $X$ is $\lambda$-quasi-convex. The converse is true if $X$ is complete. In the absence of completeness one still obtains Lipschitz extensions of maps defined on closed subsets of $\R$.

\bp\label{prop:Hoelder-ultralimit}
 Let $Z$ be a separable metric space, $A\subset Z$ and $0<\alpha\leq 1$. Let $X_\omega$ be an ultralimit of a pointed sequence of metric spaces $X_n$ such that $(Z, X_n)$ has the $\alpha$-H\"older extension property with a fixed constant $L$ for all $n$. If $\varphi\colon A\to X_\omega$ is $(\nu, \alpha)$-H\"older and $\varepsilon>0$ then there exists a bounded sequence of $((1+\varepsilon)L\nu, \alpha)$-H\"older maps $\varphi_n\colon Z\to X_n$ such that the restriction of $\lim\nolimits_\omega\varphi_n$ to $A$ coincides with $\varphi$.
\ep

\begin{proof}
 We denote the metric on $Z$ by $d$ and the metric on $X_n$ by $d_n$.  Let $A\subset Z$ be a non-empty subset, $\varphi\colon A\to X_\omega$ an $(\nu,\alpha)$-H\"older map for some $\nu>0$, and let $\varepsilon>0$. Let $\{z_k\mid k\in\N\}\subset A$ be a countable dense set. For each $k\in\N$ choose a bounded sequence of points $x_{k,n}\in X_n$ such that $\varphi(z_k) = [(x_{k,n})]$. Set $$\delta_j:=\ \min\{d(z_k,z_m)\mid 1\leq k<m\leq j\}$$ for every $j\geq 2$. Define inductively a decreasing sequence $\N = N_1\supset N_2\supset \dots$ of subsets $N_j$ such that for each $j\geq 2$ we have $\omega(N_j)=1$ and $$|d_\omega(\varphi(z_k), \varphi(z_m)) - d_n(x_{k,n}, x_{m,n})| \leq \varepsilon\nu\delta_j^\alpha$$ for all $1\leq k,m\leq j$ and all $n\in N_j$.  For every $j\in\N$ define $M_j:= N_j\setminus N_{j+1}$ and set $M_\infty:= \cap_{i\in\N} N_i$. Note that the $M_j$ are pairwise disjoint and satisfy $$\N = M_\infty \cup\bigcup_{j\in\N} M_j.$$
Let $n\in\N$. If $n\in M_\infty$ then set $j(n):=n$. If $n\not\in M_\infty$ then let $j(n)$ be the unique number for which $n\in M_{j(n)}$. Define $\varphi_n\colon \{z_1, \dots, z_{j(n)}\}\to X_n$ by $\varphi_n(z_k):= x_{k,n}$ and note that $$d_n(\varphi_n(z_k), \varphi_n(z_m))\leq d_\omega(\varphi(z_k), \varphi(z_m)) + \varepsilon\nu\delta_{j(n)}^\alpha \leq (1+\varepsilon)\nu d(z_k, z_m)^\alpha$$ for all $1\leq k<m\leq  j(n)$. This shows that the map $\varphi_n$ is $((1+\varepsilon)\nu,\alpha)$-H\"older. By the $\alpha$-H\"older extension property there thus exists an $((1+\varepsilon)L\nu, \alpha)$-H\"older extension of $\varphi_n$ to the whole $Z$, again denoted by $\varphi_n$. The sequence $(\varphi_n)$ is bounded since $\varphi_n(z_1)=x_{1,n}$ for all $n\in\N$ and hence the ultralimit map $\varphi_\omega:=\lim\nolimits_\omega \varphi_n$ exists and is $((1+\varepsilon) L\nu, \alpha)$-H\"older. 

It remains to show that $\varphi_\omega = \varphi$ on $A$. Let $k\in\N$ and observe that $j(n)\geq k$ for every $n\in N_k$ and hence $\varphi_n(z_k) = x_{k,n}$ for such $n$. Since $\omega(N_k)=1$ it follows that $\varphi_\omega(z_k) = \varphi(z_k)$. Since this is true for every $k\in\N$ we conclude that $\varphi_\omega = \varphi$ on all of $A$.
\end{proof}

The following consequences of Proposition~\ref{prop:Hoelder-ultralimit} will be useful in the sequel.

\bc\label{cor:Hoelder-ext-property-ultralimit}
 Let $Z$ be a separable metric space and $0<\alpha\leq 1$. If $X_\omega$ is an ultralimit of a pointed sequence of metric spaces $X_n$ such that $(Z, X_n)$ has the $\alpha$-H\"older extension property with a fixed constant $L$ for all $n$ then $(Z,X_\omega)$ has the $\alpha$-H\"older extension property with constant $L'$ for every $L'>L$.
\ec

Since the $\alpha$-H\"older extension property is preserved under rescaling the metric we obtain furthermore that if $Z$ is separable and $X$ such that the pair $(Z,X)$ has the $\alpha$-H\"older extension property then so does the pair $(Z, X_\omega)$ for every asymptotic cone $X_\omega$ of $X$.

In view of the remark preceding Proposition~\ref{prop:Hoelder-ultralimit} and the fact that the proof of that proposition only needed extensions of maps defined on finite sets the proof also yields the following useful fact.

\bc\label{cor:lip-curve-ultralimit}
 Let $X_\omega$ be an ultralimit of some sequence of metric spaces $X_n$ each of which is $\lambda$-quasi-convex with the same $\lambda\geq 1$. Let $c\colon [0,1]\to X_\omega$ be $\nu$-Lipschitz. Then for every $\varepsilon>0$ there exists a bounded sequence of curves $c_n\colon[0,1]\to X_n$ with $c = \lim\nolimits_\omega c_n$ and such that $c_n$ is $(1+\varepsilon)\nu\lambda$-Lipschitz and satifies $$\length(c_n)\leq (1+\varepsilon)\lambda\cdot\length(c)$$ for every $n\in\N$.
\ec

The interval $[0,1]$ can be replaced by any interval $[a,b]$ or by $S^1$.

\section{Lipschitz and Sobolev filling functions}\label{sec:Sobolev-fill-fns}

Let $X$ be a complete metric space and recall the definition of the Lipschitz Dehn function $\deltalip_X$ given in the introduction. In this section, we compare $\deltalip_X$ to the (Sobolev) Dehn function $\delta_X$ defined as follows. The (Sobolev) filling area of a Lipschitz curve $c\colon S^1\to X$ is given by
\begin{equation*}
\fillarea(c):= \inf\left\{\Area(v)\relmiddle| v\in W^{1,2}(D, X), \trace(v) = c\right\}.
\end{equation*}
The (Sobolev) Dehn function of $X$ is defined by
$$\delta_X(r)=\sup\left\{\fillarea(c)\relmiddle| \text{$c\colon S^1\to X$ Lipschitz, $\length(c)\leq r$}\right\}$$ for $r>0$.  
We will usually omit the word "Sobolev" in the sequel. The following proposition shows that for many spaces $\deltalip_X$ and $\delta_X$ are equal.

\begin{prop}\label{prop:Sobolev-Lip-Dehn-equal}
  If $X$ is a complete length space that is Lipschitz $1$-connected up to some scale, then $\delta_X(r)=\deltalip_X(r)$ for all $r>0$.  
\end{prop}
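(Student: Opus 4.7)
The inequality $\delta_X(r)\le\deltalip_X(r)$ is immediate: any Lipschitz map $u\colon\overline D\to X$ belongs to $W^{1,2}(D,X)$ with trace $u|_{S^1}$, and by Lusin's property (N) for Lipschitz maps together with Kirchheim's area formula, its Sobolev area $\int_D\jac(\apmd u_z)\,dz$ equals the parametrized Hausdorff area of \eqref{eq:intro-def-area-Lip}. Hence every Lipschitz filling of a Lipschitz curve $c$ is a Sobolev filling of identical area, whence $\fillarea(c)\le\fillarealip(c)$ and the inequality follows by passing to infima and suprema.

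For $\deltalip_X(r)\le\delta_X(r)$ I plan to establish the pointwise estimate $\fillarealip(c)\le\fillarea(c)+\epsilon$ for every Lipschitz curve $c\colon S^1\to X$ of length at most $r$ and every $\epsilon>0$. Starting from a near-optimal Sobolev filling $v\in W^{1,2}(D,X)$ with $\trace(v)=c$ and $\Area(v)<\fillarea(c)+\epsilon/2$, I would construct a Lipschitz filling $u$ of $c$ with $\Area(u)\le\Area(v)+\epsilon/2$ in three pieces. \emph{(i) Retraction to the interior}: choose a radius $\rho\in(0,1)$ close to $1$ such that $v|_{\rho S^1}$ is an absolutely continuous Sobolev curve of finite length (valid for a.e.\ $\rho$ by the ACL characterization of Sobolev maps) and $\Area(v|_{\{\rho\le|z|\le 1\}})<\epsilon/4$. \emph{(ii) Lipschitz approximation on} $\overline D_\rho$: produce a Lipschitz map $\tilde u\colon\overline D_\rho\to X$ with $\tilde u|_{\rho S^1}=v|_{\rho S^1}$ and $\Area(\tilde u)\le\Area(v|_{\overline D_\rho})+\epsilon/8$. \emph{(iii) Lipschitz collar}: fill the annulus $\{\rho\le|z|\le 1\}$ by a Lipschitz map interpolating between $c$ on $S^1$ and $v|_{\rho S^1}$ on $\rho S^1$, assembled from short Lipschitz discs provided by the scale-$\lambda_0$ extension hypothesis applied to narrow quadrilaterals bounded by small arcs of $c$ and $v|_{\rho S^1}$ together with short radial connectors; the resulting area is $O(\epsilon)$ by the choice of $\rho$ in (i).

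Step (ii) is the technical heart of the argument. I would discretize $\overline D_\rho$ by a fine triangulation $\tau$ of mesh $\delta\ll\lambda_0$, chosen, via a Fubini averaging over grid translations, so that $v$ restricts on the $1$-skeleton to absolutely continuous curves whose lengths are jointly controlled. On each triangle $F$ of $\tau$, the loop $v|_{\partial F}$ is then a short Lipschitz curve of length below $\lambda_0$, to which the Lipschitz $1$-connectedness of $X$ up to scale $\lambda_0$ associates a Lipschitz extension to $F$; gluing over all triangles produces $\tilde u$.

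The main obstacle is quantitative: the crude cell-by-cell isoperimetric bound arising from the scale assumption yields only $\sum_F\Area(\text{filling})\le C\sum_F\length(v|_{\partial F})^2\le C'E_+^2(v)$, i.e., a bound in terms of the Reshetnyak energy, which is in general strictly larger than $\Area(v)$. To close this gap and obtain the desired estimate $\Area(\tilde u)\le\Area(v)+\epsilon$ rather than $\le C'\Area(v)$, I would precompose $v$ with a nearly quasi-conformal reparametrization of $D$, available from the reparametrization theory for metric-valued Sobolev discs developed in \cite{LW15-Plateau}, so that $E_+^2$ and $\Area$ of the reparametrization agree within error $\epsilon$; the cell-by-cell bounds then aggregate to a quantity arbitrarily close to $\Area(v)$, completing the construction.
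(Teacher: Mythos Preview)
Your easy direction is fine, but Step~(ii) has a genuine gap that the reparametrization does not close. When you fill each cell $F$ by invoking the Lipschitz $1$-connectedness hypothesis, you obtain only $\Area(\text{filling on }F)\le C_0\,\length(v|_{\partial F})^2$ for a \emph{fixed} constant $C_0$ depending on $L$. Summing over cells, even in the most favourable situation where $v$ is smooth and conformal, gives $\sum_F\length(v|_{\partial F})^2\approx \kappa\,E_+^2(v)$ with a shape factor $\kappa\approx\perim(F)^2/|F|$ that is bounded below independently of the mesh. Thus your construction yields $\Area(\tilde u)\lesssim C_0\kappa\,E_+^2(v)$, and two problems remain: first, for a general metric target the reparametrization theory of \cite{LW15-Plateau} only achieves $E_+^2(v)\le 2\,\Area(v)$ (infinitesimal quasiconformality), not $(1+\varepsilon)\Area(v)$; second, even granting $E_+^2(v)=\Area(v)$, the product $C_0\kappa$ is a fixed number with no reason to be at most $1$. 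Refining the mesh does not help because the multiplicative loss is scale-invariant: you have discarded $\Area(v|_F)$ entirely and replaced it by an isoperimetric bound whose constant does not shrink. The upshot is only $\fillarealip(c)\le C'\,\fillarea(c)$ for some $C'>1$, hence $\deltalip_X\simeq\delta_X$ but not the claimed pointwise equality.

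The paper avoids this by \emph{keeping} the Sobolev map on most of the domain rather than replacing it cell by cell. One extends $u$ to a slightly larger disc by setting $\hat u(z)=c(z/|z|)$ on the outer annulus (so the boundary is automatically correct and no separate collar step is needed), and applies a Lusin-type approximation for metric-valued Sobolev maps \cite[Theorem~8.2.1]{HKST15}: for each large $t$ there is a bad set $E_t$ contained in the interior, of Lebesgue measure at most $\varepsilon t^{-2}$, off of which $\hat u$ is already $t$-Lipschitz. Taking $\varepsilon$ small forces every ball contained in $E_t$ to have radius below $M\lambda_0/t$, so a Whitney-type extension driven by the Lipschitz $1$-connectedness up to scale $\lambda_0$ redefines the map on $E_t$ alone, producing a global Lipschitz map with
\[
\Area(\text{new map})\le\Area(u)+|E_t|\cdot(MLt)^2\le\Area(u)+\varepsilon(ML)^2.
\]
The area only changes on the small set $E_t$, which is precisely the feature your cell-by-cell replacement scheme lacks.
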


\begin{proof}
% what are we proving here:
% - equivalence when Lipschitz $1$-connected up to some scale
% - thicken to become Lip $1$-connected up to some scale
% - the thickening has the same filling function
It suffices to show that $\fillarea(c)=\fillarealip(c)$ for every Lipschitz curve $c\colon S^1\to X$.  Since every Lipschitz map defined on $D$ is Sobolev  we have $\fillarea(c)\le \fillarealip(c)$.  The reverse inequality follows from the proof of \cite[Theorem~8.2.1]{HKST15} and \cite[Lemma~10.1]{LW-intrinsic} by approximating a given Sobolev disc by a Lipschitz disc with almost the same area. Indeed,  let $c\colon S^1\to X$ be a Lipschitz curve and let $u\in W^{1,2}(D,X)$ be such that $\trace(u)=c$.  Let $\hat{u}\colon \bar{B}(0,2)\to X$ be the extension of $u$ to the closed disc of radius $2$ such that:
  $$\hat{u}(z)=\begin{cases}
    u(z) & z\in D\\
    c\left(\frac{z}{\|z\|}\right) & z\not\in D.
  \end{cases}$$
  Then $\Area(u)=\Area(\hat{u})$.  By the proof of \cite[Theorem~8.2.1]{HKST15}, for any $\varepsilon>0$ and any sufficiently large $t>0$, there is a set $E_t\subset B(0,3/2)$ such that $\lip(\hat{u}|_{\bar{B}(0,2)\setminus E_t})\le t$ and the Lebesgue measure of $E_t$ is at most $\frac{\varepsilon}{t^2}$. Let $M$ be the universal constant from Section~\ref{sec:appendix}, see the remark before the proof of Theorem~\ref{thm:hoelder-ext-prop-Hoelder-1-conn}. Let $\lambda_0$ and $L$ be the constants in the definition of the Lipschitz $1$-connectedness up to some scale. By taking $\varepsilon$ sufficiently small, we may assume that $E_t$ contains no balls of radius greater than $\frac{M\lambda_0}{t}$. We can now redefine $\hat{u}$ on $E_t$ to obtain a map $v\colon \bar{B}(0,2)\to X$ such that $v=\hat{u}$ outside of $E_t$ and
$$\lip(v)\le ML\cdot \lip(\hat{u}|_{\bar{B}(0,2)\setminus E_t})\le ML t,$$
see the proof of Theorem~\ref{thm:hoelder-ext-prop-Hoelder-1-conn} and the remark preceding the proof.
Then
$$\Area(v)\le \Area(\hat{u}|_{B(0,2)\setminus E_t})+\Area (v|_{E_t})\le \Area(u)+ \frac{\varepsilon}{t^2}\cdot (ML t)^2$$
and $v$ is a Lipschitz filling of $c$.  It follows that for any $\varepsilon>0$,
$$\fillarealip(c)\le \fillarea(c)+\varepsilon(ML)^2,$$
so $\fillarealip(c)\le \fillarea(c)$ as desired.
\end{proof}

We now compare $\delta_X$ and $\deltalip_X$ to the coarse isoperimetric function $\Ar_{X,\varepsilon}$. A sort of converse to the following statement will follow from Corollary~\ref{cor:Ar-bounded-by-Sobolev-isop}.

\bp\label{prop:Sobolev-isop-bounded-above-coarse-isop}
 Let $X$ be a complete length space and $\varepsilon>0$. If $L:= \delta_X(\varepsilon)<\infty$ then $$\delta_X(r) \leq L\cdot \Ar_{X,\varepsilon}(r)$$ for every $r>\varepsilon$.
\ep

The same statement holds with $\delta_X$ replaced by $\deltalip_X$. In the statement of the proposition we use the convention that $0\cdot \infty = \infty$.

\begin{proof}
 Let $r>\varepsilon$ and let $c\colon S^1\to X$ be a Lipschitz curve with $\length(c)\leq r$. We may assume that $\Ar_{X, \varepsilon}(r)<\infty$. By definition, there exists an $\varepsilon$-filling $(P, \tau)$ of $c$ with $|\tau|\leq \Ar_{X,\varepsilon}(r)$. Endow $\tau$ with the length metric such that each edge in $\tau^{(1)}$ has length $1$ and each triangle in $\tau^{(2)}$ is isometric to a Euclidean triangle. Then the space $\tau$ with this metric is biLipschitz homeomorphic to $\overline{D}$. Thus, we may assume that each triangle in $\tau$, viewed as a subset of $\overline{D}$, is biLipschitz homeomorphic to a Euclidean triangle. Let $\nu>0$ and let $F$ be a triangle in $\tau$. Since $\length(P|_{\partial F})\leq \varepsilon$ there exists a Sobolev map $u_F\in W^{1,2}(F, X)$ with $\trace(u_F) = P|_{\partial F}$ and such that $\Area(u_F)\leq \delta_X(\varepsilon) + \nu$. The gluing of all the maps $u_F$ as $F$ ranges over all triangles in $\tau$ yields a Sobolev map $u\in W^{1,2}(D, X)$ with $\trace(u) = P|_{S^1} = c$ and such that $$\Area(u) \leq |\tau|\cdot (\delta_X(\varepsilon) +\nu).$$ See \cite[Theorem 1.12.3]{KS93} for the fact that the gluing of Sobolev maps is again Sobolev. Since $\nu>0$ was arbitrary this completes the proof.
\end{proof}

Every length space is quasi-isometric to a length space that is Lipschitz $1$-connected up to some scale.  In fact, a stronger property holds.  Let $X$ and $Y$ be metric spaces and $\varepsilon>0$. We say that $Y$ is an \emph{$\varepsilon$-thickening} of $X$ if there exists an isometric embedding $\iota\colon X\to Y$ such that the Hausdorff distance between $\iota(X)$ and $Y$ is at most $\varepsilon$.  The embedding $\iota$ is then an $(1,\varepsilon)$-quasi-isometry.

\bl\label{lem:thickening-properties}
 Let $X$ be a length space.  There is a universal constant $M$ such that for every $\varepsilon>0$ there exists a complete length space $X_\varepsilon$ which is an $\varepsilon$-thickening of $X$ and has the following property. Let $\lambda>0$ and let $c_0\colon S^1\to X_\varepsilon$ be $\lambda$-Lipschitz.  If $\lambda \leq \frac{\varepsilon}{M}$, then $c_0$ is $M\lambda$-Lipschitz homotopic to a constant curve.  If $\lambda \geq \frac{\varepsilon}{M}$ and $\varepsilon'>0$, then $c_0$ is Lipschitz homotopic to a curve $c_1\colon S^1\to X$ with $\length(c_1)\leq \length(c) + \varepsilon'$ via a homotopy of area at most $M \varepsilon \lambda$.  Furthermore, if $X$ is locally compact then $X_\varepsilon$ is locally compact.
\el

In particular, $X_\varepsilon$ is Lipschitz $1$-connected up to some scale.

\begin{proof}
 The construction of the space $X_\varepsilon$ is as in the proof of \cite[Proposition 3.2]{Wen08-sharp}. The properties stated in the lemma above all follow from the proof \cite[Proposition 3.2]{Wen08-sharp}, except the very last statement. 
Since the construction in the proof uses balls in an infinite dimensional Banach space the resulting space $X_\varepsilon$ is never locally compact. In order to remedy this one replaces the balls $X_z$ in $L^\infty(B_z)$ of the spaces $B_z$ appearing in the proof by the  the injective hulls $X_z$ of $B_z$, see below. If $X$ is locally compact, then the $B_z$ and the $X_z$ are also locally compact. It follows that the resulting thickening $X_\varepsilon$ is locally compact.
\end{proof}

Injective hulls were constructed by Isbell in \cite{Isb64}.  A metric space $E$ is \emph{injective} if for every subspace $Y$ of a metric space $Z$, any 1-Lipschitz map $Y\to E$ extends to a $1$-Lipschitz map $Z\to E$.  An \emph{injective hull} $E$ of a metric space $X$ is an injective metric space equipped with an isometric embedding $e\colon X\to E$ that is minimal in an appropriate sense among injective spaces containing $X$.  We summarize, without proof, the properties of injective hulls that we used above and will use in Section~\ref{sec:stab-isop}. See \cite{Isb64} for the proofs of these properties.
\bt
  Every metric space $X$ has a unique injective hull $E(X)$ up to isometry.  This hull is contractible via a Lipschitz map.  If $X$ is compact, then $E(X)$ is compact, and if $X$ is a Banach space, then $E(X)$ is a Banach space.
\et

The Dehn function of the thickening $X_\varepsilon$ constructed above is not much bigger than that of $X$.
\bp\label{prop:Lipschitz-isoperimetric-ineq}
Let $X$ be a complete length space.  Then for every $\varepsilon>0$ there exists an $\varepsilon$-thickening $Y$ of $X$ that is a complete length space and Lipschitz $1$-connected up to some scale and satisfies $$\delta_{Y}(r) \le \varepsilon r + \delta_X(r+\varepsilon)$$ for all $r>0$.  If $X$ is locally compact, then $Y$ can be chosen to be locally compact as well.
\ep

The proof of the proposition will moreover show that if $X$ admits a quadratic isoperimetric inequality with constant $C$ then $Y$ admits a quadratic isoperimetric inequality with a constant depending only on $C$.

The following construction will be needed here and also later. We set $I:= [0,1]$.

\bl\label{lem:annulusReparam}
  There exists $M>0$ such that if $c, c'\colon  I\to X$ are
  two Lipschitz parameterizations of the same curve, then there is a Lipschitz homotopy
  $h\colon  I\times [0,1]\to X$ such that $h(s,0)=c(s)$,
  $h(s,1)=c'(s)$, and
  $\lip(h)\le M\cdot \max\{\lip(c),\lip(c')\}$.
  Moreover, $\Area(h) = 0$.
\el

The interval $I$ can be replaced by $S^1$. 

\begin{proof}
 It suffices to consider the case that $c'$ is parameterized proportional to arc-length.  Then $c'(s)=c(\varrho(s))$ for some Lipschitz function $\varrho$. Then the map
  $$h(s,t)=c((1-t) s+ t \varrho(s))$$ is a Lipschitz homotopy from $c$ to $c'$.
  The bound on the Lipschitz constant of $h$ follows by calculation. Since the image of $h$ is one-dimensional it follows that $\Area(h) = 0$.
\end{proof}

\begin{proof}[{Proof of Proposition~\ref{prop:Lipschitz-isoperimetric-ineq}}]
  Let $M$ be as in Lemma~\ref{lem:thickening-properties}. Set $\bar{\varepsilon}=\frac{\varepsilon}{2 M}$ and let $Y=X_{\bar{\varepsilon}}$ be as in Lemma~\ref{lem:thickening-properties}.  Let $c\colon S^1\to Y$ be a Lipschitz curve.  Let $\bar{c}$ be the constant speed parameterization of $c$ so that $\fillarea(c)=\fillarea(\bar{c})$ by Lemma~\ref{lem:annulusReparam}.  Note that $\bar{c}$ is $\lambda$-Lipschitz with $\lambda=\frac{\length(c)}{4}$.
If $\lambda \leq \frac{\bar{\varepsilon}}{M}$, then $\bar{c}$ is $M\lambda$-Lipschitz homotopic to a constant curve.  It follows that
$$\fillarea(\bar{c})\le 2\pi M^2\lambda^2\le 2\pi M\bar{\varepsilon}\cdot \frac{\length(c)}{4} \le \varepsilon \length(c).$$
If $\lambda\geq \frac{\bar{\varepsilon}}{M}$, then $\bar{c}$ is Lipschitz homotopic to a Lipschitz curve $c_1\colon S^1\to X$  via a homotopy of area at most $M \bar{\varepsilon} \lambda$, and $\length(c_1)\le \length(c)+\varepsilon$.  We can thus fill $c$ by a disc of area
$$\fillarea(c)\le M\bar{\varepsilon} \lambda+\fillarea(c_1)\le \varepsilon\cdot \length(c)+\delta_X(\length(c)+\varepsilon),$$
as desired.
\end{proof}

Similar thickenings allow us to work with spaces arising from geometric group theory such as Cayley graphs.

\bp\label{prop:quad-isop-admissible}
 Let $X$ be a length space such that $\Ar_{X, \varepsilon}(r)<\infty$ for some $\varepsilon>0$ and all $r>0$. Then there exists a thickening $Y$ of $X$ that is a complete length space, Lipschitz $1$-connected up to some scale, and satisfies $\deltalip_Y\preceq \Ar_{X,\varepsilon}$. If $X$ is locally compact then $Y$ can be chosen to be locally compact as well.
\ep

In particular, if $X$ is such that $\Ar_{X, \varepsilon}(r)\preceq r^2$ then there exists $C$ such that $\deltalip_Y(r)\leq Cr^2$ for all $r>0$. Similarly, if $X$ satisfies a quadratic isoperimetric inequality (with Lipschitz or Sobolev maps) only for curves of length $\geq r_0$ then there exists a thickening $Y$ of $X$ which is a complete length space, is Lipschitz $1$-connected up to some scale, and satisfies $\deltalip_Y(r)\leq Cr^2$ for some $C$ and all $r>0$.

\begin{proof}
 This is a straight-forward consequence of Proposition~\ref{prop:Sobolev-isop-bounded-above-coarse-isop} and Lemma~\ref{lem:thickening-properties}.
\end{proof}

\section{The coarse isoperimetric function}\label{sec:bound-filling-mesh}

We give bounds on the coarse isoperimetric function using the constructions in \cite{LW-intrinsic} and \cite{LW15-Plateau}. We then prove the equivalence of the coarse isoperimetric function and the Dehn function and establish the quasi-isometry invariance of the  Dehn function. This will imply Theorems~\ref{thm:coarse-isop-equals-Lip-isop}  and \ref{thm:qi-invariance-Lip-Dehn}.

Let $Z$ be a metric space homeomorphic to $\overline{D}$. We denote by $\partial Z$ the boundary circle of $Z$, that is, the image of $S^1$ under the homeomorphism from $\overline{D}$ to $Z$. Recall the definition of a triangulation of $\overline{D}$ given in Section~\ref{sec:coarse-isoperimetric}. Triangulations of $Z$ are defined in analogy.

The first result is a variant of \cite[Theorem 8.11]{LW-intrinsic}.

\bt\label{thm:triangulation-metric-discs}
 Let $Z$ be a geodesic metric space homeomorphc to $\overline{D}$ such that $\length(\partial Z)<\infty$ and $\hm^2(Z)<\infty$. Suppose there exist $C, l_0>0$ such that every Jordan domain $\Omega\subset Z$ with $\length(\partial \Omega)<l_0$ satisfies 
 \begin{equation}\label{eq:isop-domains-triangulations}
  \hm^2(\Omega)\leq C\cdot \length(\partial \Omega)^2.
 \end{equation}
Then for every $n\in\N$ with $n> \frac{8\length(\partial Z)}{l_0}$ there exists a triangulation of $Z$ into at most $$K\cdot n + K\cdot n^2\cdot\frac{\hm^2(Z)}{\length(\partial Z)^2}$$ triangles of diameter at most $\frac{\length(\partial Z)}{n}$ each, where $K$ depends only on $C$. Moreover, every edge contained in $\partial Z$ has length at most $\frac{\length(\partial Z)}{n}$.
\et

\begin{proof}
 We may assume that $C\geq 1$. By the proof of \cite[Theorem 8.11]{LW-intrinsic} there exists a finite, connected graph $\partial Z\subset \hat{G}\subset Z$ such that $Z\setminus \hat{G}$ has at most $$K\cdot n + K\cdot n^2\cdot\frac{\hm^2(Z)}{\length(\partial Z)^2}$$ components, each of which is a topological disc of diameter at most $\frac{\length(\partial Z)}{n}$. Here, $K$ only depends on the isoperimetric constant $C$. Note that \cite[Theorem 8.11]{LW-intrinsic} is formulated for a special metric space $Z$ but the proof only relies on the inequality \eqref{eq:isop-domains-triangulations}, see \cite[Remark 8.4]{LW-intrinsic}.
 We may assume that every vertex of $\hat{G}$ has degree at least $3$. We will call the components of $Z\setminus \hat{G}$ the faces of the graph $\hat{G}$. By the above, every face $F$ of $\hat{G}$ has diameter at most $\frac{\length(\partial Z)}{n}$. Denote by $v$, $e$, and $f$ the number of vertices, edges, and faces of $\hat{G}$, respectively. Notice that $e\geq \frac{3v}{2}$ and hence, by Euler's formula, $e = v+ f - 1 < \frac{2e}{3} + f$. This implies $e<3f$.

We now construct a subdivision $G$ from $\hat{G}$ as follows. Firstly, add $n$ new vertices on $\partial Z$ in such a way as to divide $\partial Z$ into $n$ segments of equal length $\frac{\length(\partial Z)}{n}$. Moreover, for each face of $\hat{G}$ whose boundary consists of a single edge, add an extra vertex in the interior of the edge. Denote this new graph by $\hat{G}$ again. The number $e'$ of edges in this new graph is bounded by $e'\leq e + n + f$. Now, fix a point in the interior of each face $F$ of $\hat{G}$ and connect it by an injective curve in $F$ to each vertex of $F$ such that the curves do not intersect except possibly at endpoints. Thus, if $F$ has $m$ vertices then $F$ is replaced by $m$ new faces. This yields a new planar graph $\partial Z\subset G\subset Z$ all of whose faces have boundary consisting of three edges and all of whose faces have diameter at most $\frac{\length(\partial Z)}{n}$. The number $L$ of faces of $G$ is bounded by the sum of degrees of vertices of $\hat{G}$, which equals $2e'$. Consequently, $$L\leq 2e'\leq 2e + 2n + 2f < 8f +2n\leq K'\cdot n + K'\cdot n^2\cdot\frac{\hm^2(Z)}{\length(\partial Z)^2}$$ for some constant $K'$ only depending on $C$. The triangulation of $Z$ associated with $G$ has the desired properties. This completes the proof.
\end{proof}

\bt\label{thm:triangulation-isop-ineq}
 Let $X$ be a locally compact, geodesic metric space such that, for some $C,r_0>0$, we have $\delta_X(r)\leq Cr^2$ for all $0<r<r_0$. Let $c\colon S^1\to X$ be a Lip\-schitz curve and $u\in W^{1,2}(D,X)$ such that $\trace(u) = c$. Then for every $n\in\N$ with $n>\frac{8\length(c)}{r_0}$ there exists a triangulation $\tau$ of $\overline{D}$ with at most $$K\cdot n + K\cdot n^2\cdot\frac{\Area(u)}{\length(c)^2}$$ triangles and a continuous map $P\colon\tau^{(1)}\to X$ such that $P|_{S^1} = c$ and $\length(P|_{\partial F})\leq \frac{4\length(c)}{n}$ for every triangle $F$ in $\tau$. Here, $K$ only depends on $C$.
\et

Note that if $X$ is Lipschitz $1$-connected up to some scale, then there exist $C,r_0>0$ such that $\delta_X(r)\leq Cr^2$ for all $0<r<r_0$.

\begin{proof}
 Consider the space $Y:= X\times \R^2$, which satisfies $\delta_Y(r)\leq C'r^2$ for all $0<r<r_0$, where $C'$ only depends on $C$, see  \cite[Lemma 3.2]{LW15-harmonic}. Let $\varepsilon>0$ be sufficiently small. Then the biLipschitz curve in $Y$ given by $\hat{c}(z):=(c(z), \varepsilon z)$ satisfies $$\length(c)<\length(\hat{c}) < \min\left\{\frac{nr_0}{8}, \frac{4\length(c)}{3}\right\}.$$
Moreover, there exists $\hat{u}\in W^{1,2}(D, Y)$ with trace $\hat{c}$ and such that $$\Area(\hat{u})\leq \Area(u) + \frac{\length(c)^2}{n},$$ compare with the proof of \cite[Theorem 3.4]{LW15-harmonic}. It follows from the results in \cite[Section 1]{LW-intrinsic} that there exist a geodesic metric space $Z$ homeomorphic to $\overline{D}$, and a $1$-Lipschitz map $\overline{u}\colon Z\to X$ with the following properties. Firstly, the restriction of $\overline{u}$ to the boundary circle $\partial Z$ of $Z$ is an arc-length preserving homeomorphism onto the image $\Gamma$ of $\hat{c}$, in particular, $\length(\partial Z)=\length(\hat{c})$. Secondly, every Jordan domain $\Omega\subset Z$ with $\length(\partial \Omega)<r_0$ satisfies $$\hm^2(\Omega)\leq C'\cdot \length(\partial \Omega)^2.$$ Thirdly, the Hausdorff measure of $Z$ is bounded by $$\hm^2(Z)\leq \Area(\hat{u})\leq \Area(u) +\frac{\length(c)^2}{n}.$$ Let $\tau$ be a triangulation of $Z$ as in Theorem~\ref{thm:triangulation-metric-discs}. Then the number $N$ of triangles in $\tau$ is bounded by $$N\leq K\cdot n + K\cdot n^2\cdot\frac{\hm^2(Z)}{\length(\partial Z)^2} \leq  (K+1)\cdot n + K\cdot n^2\cdot \frac{\Area(u)}{\length(c)^2}$$ for some constant $K$ only depending on $C$. Moreover, the diameter of each triangle in $\tau$ is bounded from above by $\frac{4\length(c)}{3n}$. Finally, the length of each edge contained in $\partial Z$ is also at most $\frac{4\length(c)}{3n}$.

Let now $\varphi\colon S^1\to \partial Z$ be the homeomorphism given by $\varphi:= (\overline{u}|_{\partial Z})^{-1}\circ\hat{c}$. By the Schoenflies theorem, $\varphi$ extends to a homeomorphism from $\overline{D}$ to $Z$, which we denote again by $\varphi$. Let $\hat{\tau}$ be the triangulation of $\overline{D}$ given by $\varphi^{-1}(\tau)$. We define a map $P\colon\hat{\tau}^{(1)}\to X$ as follows. Denote by $\pi\colon Y\to X$ the natural $1$-Lipschitz projection. For $z\in S^1$ or $z\in \hat{\tau}^{(0)}$ let $P(z):= \pi\circ \overline{u}\circ\varphi(z)$. On each edge $[v,v']$ in $\hat{\tau}^{(1)}$ not contained in $S^1$ define $P$ to be a geodesic from $P(v)$ to $P(v')$. It follows from the construction that for each triangle $F$ in $\hat{\tau}$ we have $$\length(P|_{\partial F})\leq 3\cdot \frac{4\length(c)}{3n} = \frac{4\length(c)}{n}$$ and that $P$ agrees with $c$ on $S^1$.
\end{proof}

As a consequence of Theorem~\ref{thm:triangulation-isop-ineq} we obtain a bound on the coarse isoperimetric function as follows.

\bc\label{cor:Ar-bounded-by-Sobolev-isop}
 Let $X$ be a locally compact, geodesic metric space such that, for some $C,r_0>0$, we have $\delta_X(r)\leq Cr^2$ for all $0<r<r_0$. Then there exists $K$ depending only on $C$ and $r_0$ such that $$\Ar_{X,\varepsilon}(r) \leq 1 + K\cdot r + K\cdot \delta_X(r)$$ for every $r>0$ and every $\varepsilon\geq \frac{r_0}{2}$.
\ec

\begin{proof}
 Let $\varepsilon\geq \frac{r_0}{2}$ and let $c\colon S^1\to X$ be a Lipschitz curve. Set $L:= \frac{8\length(c)}{r_0}$, and 
 let $n\in\N$ be the smallest integer with $n> L$. If $L<1$ then $\length(c)<\frac{r_0}{8}<\varepsilon$ and hence $\Ar_{X,\varepsilon}(\length(c)) = 1$. We may therefore assume that $L\geq 1$ so that $n\leq \frac{16\length(c)}{r_0}$. Since $$\frac{4\length(c)}{n}<\frac{4\length(c)}{L} = \frac{r_0}{2}\leq \varepsilon$$ it follows from Theorem~\ref{thm:triangulation-isop-ineq} that for every $\nu>0$
 \begin{equation*}
  \begin{split}
   \Ar_{X,\varepsilon}(\length(c)) & \leq K\cdot n + K\cdot n^2\cdot \frac{\delta_X(\length(c)) + \nu}{\length(c)^2}\\
   &\leq \frac{K'}{r_0}\cdot \length(c) + \frac{K'}{r_0^2}\cdot (\delta_X(\length(c)) + \nu)
  \end{split}
 \end{equation*}
 for some constants $K$ and $K'$ depending only on $C$. Since $\nu>0$ was arbitrary this completes the proof.
\end{proof}

We now obtain the following analog of Theorem~\ref{thm:coarse-isop-equals-Lip-isop}.

\bt\label{thm:coarse-isop-equals-Sobolev-isop}
 Let $X$ be a locally compact, geodesic metric space satisfying $\delta_X(r)<\infty$ for all $r>0$. If there exist $C, r_0>0$ such that $\delta_X(r)\leq Cr^2$ for all $r\in(0,r_0)$ then $\Ar_{X, \varepsilon}\simeq \delta_X$ for every $\varepsilon>0$.
\et

 This follows from Corollary~\ref{cor:Ar-bounded-by-Sobolev-isop} and Proposition~\ref{prop:Sobolev-isop-bounded-above-coarse-isop}. Theorem~\ref{thm:coarse-isop-equals-Lip-isop} is a consequence of the theorem above together with Proposition~\ref{prop:Sobolev-Lip-Dehn-equal}. Finally, the following result together with Proposition~\ref{prop:Sobolev-Lip-Dehn-equal} implies Theorem~\ref{thm:qi-invariance-Lip-Dehn}.

\bt\label{thm:qi-invariance-Sobolev-Dehn}
 Let $X$ and $Y$ be locally compact, geodesic metric spaces such that $\delta_X(r)<\infty$ and $\delta_Y(r)<\infty$ for all $r>0$ and such that there exist $C,r_0>0$ with $$\delta_X(r) \leq Cr^2\quad\text{ and }\quad \delta_Y(r)\leq Cr^2$$ for all $0<r<r_0$. If $X$ and $Y$ are quasi-isometric then $\delta_X\simeq \delta_Y$.
\et

\begin{proof}
This is a consequence of Theorem~\ref{thm:coarse-isop-equals-Sobolev-isop} together with Proposition~\ref{prop:qi-invariance-coarse-isop}. Finiteness of the functions $\Ar_{X,\varepsilon}$ and $\Ar_{Y, \varepsilon}$ follows from Corollary~\ref{cor:Ar-bounded-by-Sobolev-isop}.
\end{proof}

\section{Stability of the quadratic isoperimetric inequality}\label{sec:stab-isop}

The main result of this section is the following theorem, which readily implies Theorem~\ref{thm:stab-isop-intro} in the introduction.  

\bt\label{thm:stability-isop-ultralimit}
 Let $X_\omega$ be an ultralimit of a sequence of proper, $\lambda$-quasi-convex metric spaces $X_n$ such that, for some $C,r_0>0$, we have $\delta_{X_n}(r)\leq Cr^2$ for all $0<r<r_0$ and all $n\in\N$. Then for every Lipschitz curve $c\colon S^1\to X_\omega$ with $\length(c)<\lambda^{-1}\cdot r_0$ and every $\varepsilon>0$ there exists $u\in W^{1,p}(D, X_\omega)$ with $\trace(u)=c$ and satisfying 
 \begin{equation}\label{eq:isop-energy-bdd-ultralimit}
  \Area(u)\leq (C+\varepsilon)\lambda^2\cdot \length(c)^2\quad\text{and}\quad \left[E_+^p(u)\right]^{\frac{1}{p}}\leq C'\lambda\cdot\lip(c),
 \end{equation}
  where $p>2$ and $C'$ depend only on $C$ and $\varepsilon$. In particular, $X_\omega$ satisfies $\delta_{X_\omega}(r)\leq \lambda^2 Cr^2$ for all $0<r< \frac{r_0}{\lambda}$.
\et

Simple examples show that the constants in the isoperimetric inequality in $X_\omega$ cannot be improved in general. 

The main ingredient in the proof of the theorem is the following result obtained in \cite[Theorem 3.4]{LW15-harmonic}. 

\bt\label{thm:good-filling}
Let $X$ be a proper metric space such that, for some $C,r_0>0$, we have $\delta_{X}(r)\leq Cr^2$ for all $0<r<r_0$. Let $\varepsilon>0$. Then every Lipschitz curve $c\colon S^1\to X$ with $\length(c)< r_0$ is the trace of some $u\in W^{1,p}(D, X)$ with $$\Area(u)\leq \fillarea(c) + \varepsilon\cdot\length(c)^2 \leq (C+\varepsilon)\cdot\length(c)^2$$ and $\left[E_+^p(u)\right]^{\frac{1}{p}}\leq C'\cdot \lip(c)$, where $p>2$ and $C'$ only depend on $C$, $\varepsilon$.
\et

We can now give the proof of Theorem~\ref{thm:stability-isop-ultralimit}.

\begin{proof}
 Suppose $X_\omega$ is the ultralimit (with respect to some non-principal ultrafilter $\omega$) of the sequence of pointed metric spaces $(X_n, d_n, p_n)$.
 
 Let $c\colon S^1\to X_\omega$ be a $\nu$-Lipschitz curve of length $\length(c)<\lambda^{-1}\cdot r_0$. Fix $\varepsilon>0$ and let $\delta\in(0,1)$ be such that $(1+\delta)\lambda \length(c)<r_0$ and $(1+\delta)^2 (C+\varepsilon/2)\leq C+\varepsilon$. By Corollary~\ref{cor:lip-curve-ultralimit} there exists a bounded sequence of $(1+\delta)\lambda\nu$-Lipschitz curves $c_n\colon S^1\to X_n$ such that $\lim\nolimits_\omega c_n = c$ and such that $$\length(c_n)\leq (1+\delta)\lambda\cdot\length(c)< r_0$$ for every $n$.
 By Theorem~\ref{thm:good-filling} there exist $p>2$ and $C'$ depending only on $C$ and $\varepsilon$ such that the following holds. For each $n\in\N$ there exists $u_n\in W^{1,p}(D, X_n)$ with $\trace(u_n) = c_n$ and such that 
 \begin{equation}\label{eq:area-upper-bd-isop-stab}
  \Area(u_n)\leq \left(C+\frac{\varepsilon}{2}\right)\cdot \length(c_n)^2\leq (C+\varepsilon)\lambda^2\cdot \length(c)^2
 \end{equation}
  and $$\left[E_+^p(u_n)\right]^{\frac{1}{p}} \leq 2^{-1}C'\lip(c_n)\leq C'\lambda\nu.$$ In particular, there exists a representative of $u_n$, denoted by the same symbol, which is $(L,\alpha)$-H\"older on $\overline{D}$, where $\alpha= 1 - \frac{2}{p}$ and where $L$ depends on $C'$, $\lambda$, $\nu$ and $p$, see Proposition~\ref{prop:super-critical-Sobolev}.  
 
Now note that $(u_n)$ is a bounded sequence since each $u_n$ is $(L,\alpha)$-H\"older. Let $u\colon\overline{D}\to X_\omega$ be the ultralimit of $(u_n)$. Then $u$ is $(L,\alpha)$-H\"older on $\overline{D}$ and satisfies $u|_{S^1} = c$. In order to complete the proof it thus suffices to show that $u\in W^{1,p}(D, X_\omega)$ and that the inequalities in \eqref{eq:isop-energy-bdd-ultralimit} hold.
Observe first that the sequence of subsets $Y_n:= u_n(\overline{D})$, when endowed with the metric $d_n$, is a uniformly compact sequence of metric spaces in the sense of Gromov. This is a consequence of the fact that each $u_n$ is $(L,\alpha)$-H\"older. By Gromov's compactness theorem for metric spaces, there exist a compact metric space $(Y,d_Y)$ and isometric embeddings $\varphi_n\colon Y_n\hookrightarrow Y$ for every $n\in\N$. Let $v\colon\overline{D}\to Y$ be the ultralimit of the sequence of maps $v_n:= \varphi_n\circ u_n$. We claim that a subsequence $(v_{n_j})$ of $(v_n)$ converges uniformly to $v$ and thus also in $L^p(D, Y)$. Indeed, let $B\subset \overline{D}$ be a countable dense set. Then there exists a subsequence $(n_j)$ such that $v_{n_j}(z)\to v(z)$ as $j\to\infty$ for every $z\in B$. Since $v_{n_j}$ is $(L,\alpha)$-H\"older for every $j$ it follows that $v_{n_j}$ converges uniformly to $v$, which proves the claim. Thus, \cite[Theorem 1.6.1]{KS93} implies that $v\in W^{1,p}(D, Y)$. Moreover, $v$ satisfies the inequalities in \eqref{eq:isop-energy-bdd-ultralimit} by the weak lower semi-continuity of the $E_+^p$-energy and of area, see \cite[Corollaries 5.7 and 5.8]{LW15-Plateau}.
Finally, set $A:= v(\overline{D})$. Since $$d_Y(v(z), v(z')) = d_\omega(u(z), u(z'))$$ for all $z,z'\in\overline{D}$ there exists an isometric embedding $\iota\colon A\hookrightarrow X_\omega$ such that $u = \iota\circ v$. Since $\iota$ is isometric it follows that $u\in W^{1,p}(D, X_\omega)$ and that \eqref{eq:isop-energy-bdd-ultralimit} holds. This completes the proof.
\end{proof}

Proposition~\ref{prop:quad-isop-admissible} and Theorem~\ref{thm:stability-isop-ultralimit} imply that every asymptotic cone of a metric space with a quadratic isoperimetric inequality for sufficiently long curves admits a quadratic isoperimetric inequality with some constant which may be large.  In the following corollary we show how to obtain a better bound for the isoperimetric constant of the asymptotic cones. This will be used in the proof of Theorem~\ref{thm:isop-Gromov-hyp} below.

\bc\label{cor:isop-asymp-same-const}
 Let $X$ be a locally compact, geodesic metric space. Suppose there exist $\beta, r_0>0$ such that every Lipschitz curve $c\colon S^1\to X$ with $\length(c)\geq r_0$ is the trace of a map $u\in W^{1,2}(D, X)$ with $\Area(u)\leq \beta\cdot \length(c)^2$. Then every asymptotic cone of $X$ admits a quadratic isoperimetric inequality with constant $\beta$. 
\ec

\begin{proof}
 By Proposition~\ref{prop:quad-isop-admissible} and the remark following it, the space $X$ isometrically embeds into a locally compact, geodesic metric space $Y$ which is at finite Hausdorff distance from $X$ and which admits a quadratic isoperimetric inequality with some constant $C$. Note first that if $\hat{c}\colon S^1\to X$ is a Lipschitz curve of length $\length(\hat{c})\geq r_0$ then, by assumption, $$\fillarea_{Y}(\hat{c})\leq \fillarea_X(\hat{c}) \leq \beta\cdot\length(\hat{c})^2.$$ Here, $\fillarea_Y(\hat{c})$ and $\fillarea_X(\hat{c})$ denote the filling areas in $Y$ and $X$, respectively. This fact together with Theorem~\ref{thm:good-filling} lets us improve inequality \eqref{eq:area-upper-bd-isop-stab} in the proof of Theorem~\ref{thm:stability-isop-ultralimit} for $n$ large enough. Namely, we can find fillings $u_n$ in $Y$ which satisfy the properties in the proof of Theorem~\ref{thm:stability-isop-ultralimit} and moreover $$\Area(u_n) \leq \fillarea_{Y}(c_n) + \frac{\varepsilon}{2}\cdot \length(c_n)^2 \leq \left(\beta+\frac{\varepsilon}{2}\right)\cdot \length(c_n)^2$$ for all $n$ sufficiently large.
\end{proof}

In the rest of this section we give some consequences of the isoperimetric stability proved above. We first provide the proof of Theorem~\ref{thm:isop-Gromov-hyp-Lip}. We in fact prove the following stronger version.

\bt\label{thm:isop-Gromov-hyp}
 Let $X$ be a locally compact, geodesic metric space and $\varepsilon, r_0>0$. If every Lip\-schitz curve $c\colon S^1\to X$ with $\length(c)\geq r_0$ is the trace of some $v\in W^{1,2}(D, X)$ with $$\Area(v)\leq \frac{1-\varepsilon}{4\pi}\cdot  \length(c)^2$$ then $X$ is Gromov hyperbolic.
\et

\begin{proof}
 By \cite[Proposition 3.A.1]{Dru02} it is enough to prove that every asymptotic cone of $X$ is a metric tree. Let therefore $Y=(Y, d_Y)$ be an asymptotic cone of $X$. Since $X$ is geodesic it follows that $Y$ is geodesic.  Note also that $Y$ admits a quadratic isoperimetric inequality with some constant $C<\frac{1}{4\pi}$ by Corollary~\ref{cor:isop-asymp-same-const}.
 
We first claim that every $u\in W^{1,2}(D, Y)$ has $\Area(u)=0$. We argue by contradiction and assume there exists $u$ which has strictly positive area. It follows from \cite{Kir94} or \cite[Proposition 4.3(ii)]{LW15-Plateau} that for some $p\in Y$ and some sequence $r_n\to\infty$ the ultralimit $Y_\omega:= (Y, r_n d_Y, p)_\omega$ contains an isometric copy of a two-dimensional normed plane $V$. Let $Z$ be the injective hull of $V$. Then $Z$ is a Banach space and there exists a $1$-Lipschitz map from $Y_\omega$ to $Z$ which restricts to the identity on $V$. It follows that curves in $V$ can be filled at least as efficiently in $Z$ as in $Y_\omega$. 
Since the isoperimetric constant of every normed plane is at least $\frac{1}{4\pi}$, see \cite{Wen08-sharp}, and since affine discs in normed spaces are area minimizers by \cite{BI12} it follows that the isoperimetric constant of $Z$ and thus also of $Y_\omega$ must be at least $\frac{1}{4\pi}$. However, $Y_\omega$ is an asymptotic cone of $X$ by \cite{DS05} and thus admits a quadratic isoperimetric inequality with some constant $C<\frac{1}{4\pi}$ by Corollary~\ref{cor:isop-asymp-same-const}, a contradiction. This proves the claim.
 
We finally show that $Y$ is a metric tree. For this it suffices to show that $Y$ does not contain any rectifiable Jordan curve. We argue by contradiction and assume that there exists a rectifiable Jordan curve $\Gamma$ in $X$. Let $c\colon S^1\to \Gamma$ be a Lipschitz parameterization and let $u\in W^{1,2}(D, Y)$ be such that $\trace(u) = c$.
Then $\Area(u)=0$ by the claim above. Now, let $Z$ be the injective hull of $\Gamma$. Since $\Gamma$ is compact it follows that $Z$ is compact. Moreover, $Z$ admits a quadratic isoperimetric inequality. Denote by $\Lambda(\Gamma, Z)$ the set of all maps in $W^{1,2}(D, Z)$ whose trace is a weakly monotone parameterization of $\Gamma$. By \cite[Theorem 1.1]{LW15-Plateau} and \cite[Lemma 7.2]{LW15-Plateau}, there exists $v\in \Lambda(\Gamma, Z)\cap C^0(\overline{D}, Z)$ which satisfies $$\Area(v) = \inf\{\Area(w)\mid w\in\Lambda(\Gamma, Z)\}$$ and $E_+^2(v)\leq 2\cdot\Area(v)$.
Let $\pi\colon Y\to Z$ be a $1$-Lipschitz map which restricts to the identity on $\Gamma$. Then $w:=\pi\circ u$ is in $\Lambda(\Gamma, Z)$ and $\Area(w) = 0$. Since $v$ is an area minimizer it follows that also $\Area(v)=0$ and hence that $E_+^2(v)=0$ and hence that $v$ is constant. This contradicts the fact that $\trace(v)$ weakly monotonically parameterizes the Jordan curve $\Gamma$. 
\end{proof}

As mentioned in the introduction, one can also prove the following theorem from \cite{Wen11-nilp} using the results above. For the definitions involving Carnot groups we refer to \cite{Wen11-nilp} .

\bt\label{thm:super-quad-growth}
 Let $G$ be a Carnot group of step $2$ with grading $\mathfrak{g} = V_1\oplus V_2$ of its Lie algebra. Suppose $u\in V_2$ is such that $u \not= [v,w]$ for all $v,w\in V_1$. Let $H$ be the Carnot group of step $2$ whose Lie algebra is $\mathfrak{h} = V_1\oplus V_2/\langle u \rangle$ and endow $H$ with a left-invariant Riemannian metric. Then $H$ does not admit a quadratic isoperimetric inequality.
\et

In \cite{Wen11-nilp}, this is proved by showing that if $H$ admits a quadratic isoperimetric inequality, then minimal surfaces in $H$ converge to integral currents in its asymptotic cone.  By Pansu's Rademacher theorem for Carnot groups and Stokes' theorem for currents, there are curves in the asymptotic cone that do not bound integral currents, leading to a contradiction.  A version of Stokes' theorem also holds for Sobolev maps (with Lipschitz trace), so one can use Corollary~\ref{cor:isop-asymp-same-const} to show that minimal discs in $H$ converge to Sobolev discs and arrive at the same contradiction.

\section{H\"older-Sobolev extensions}\label{sec:Hoelder-Sobolev}

In this section we prove the first part of Corollary~\ref{cor:Lip-asymp-simply-connected} as well as Theorem~\ref{thm:isop-Hoelder-ext}, which was alluded to in the introduction and which yields H\"older extensions with additional properties. We introduce the notion of H\"older $1$-connectedness of a space and show that for quasi-convex metric spaces H\"older $1$-connectedness implies simple connectedness and is equivalent to the H\"older extension property in case the ambient space is complete. Theorem~\ref{thm:isop-Hoelder-ext} and the first part of Corollary~\ref{cor:Lip-asymp-simply-connected} will then become consequences of the results proved in Section~\ref{sec:stab-isop}.

%\bd
% Let $0<\alpha\leq 1$ and $r>0$. A metric space $X$ is said to be $\alpha$-H\"older $1$-connected up to scale $r$ if there exists $L$ such that every $\nu$-Lipschitz curve $c\colon S^1\to X$ with $\length(c)<r$ and $\nu>0$ has an $(L\nu, \alpha)$-H\"older extension to the closed unit disc.\footnote{\bf Not sure whether it makes sense to gives this local formulation since we only use it in Proposition~\ref{prop:Hoelder-ext-Lip-curves-add-prop}.}
% \ed
 
 \bd
 Given $0<\alpha\leq 1$, a metric space $X$ is said to be $\alpha$-H\"older $1$-connected if there exists $L$ such that every $\nu$-Lipschitz curve $c\colon S^1\to X$ with $\nu>0$ has an $(L\nu, \alpha)$-H\"older extension to $\overline{D}$.
 \ed
 
Quasi-convex metric spaces which are $\alpha$-H\"older $1$-connected with $\alpha=1$ are called Lip\-schitz $1$-connected in \cite{LS05}. 

\bp\label{prop:Hoelder-1-conn-simply-conn}
 Let $X$ be a quasi-convex metric space. If $X$ is $\alpha$-H\"older $1$-connected for some $\alpha>0$ then $X$ is simply connected.
\ep

\begin{proof}
 Let $\lambda\geq 1$ be such that $X$ is $\lambda$-quasi-convex. We denote the metric on $X$ by $d$. Let $c\colon[0,1]\to X$ be a continuous curve with $c(0)=c(1)$. Denote by $Q$ the closed unit square in $\R^2$ and define $\varphi\colon\partial Q\to X$ by $\varphi(s,0):= c(s)$ and $\varphi(s,t):= c(0)$ if $s=0$ or $s=1$ or $t=1$. It suffices to show that $\varphi$ extends continuously to $Q$. We construct such an extension as follows. 
 For every $k\geq 0$ set $$b_k:=\max\left\{d(c(a_k^m), c(a_k^{m+1}))\relmiddle| m=0,1,\dots, 2^k-1\right\},$$ where $a_k^m:= 2^{-k}m$ for $0\leq m\leq 2^k$. On each segment $[a_k^m, a_k^{m+1}]\times\{2^{-k}\}$ with $k\geq 1$ and $0\leq m\leq 2^k-1$ define $\varphi$ to be a Lipschitz curve from $c(a_k^m)$ to $c(a_k^{m+1})$ with Lipschitz constant at most $\lambda 2^k b_k$. This defines $\varphi$ on the horizontal boundary segments of each of the rectangles $$R_k^m:= [a_k^m, a_k^{m+1}]\times [2^{-(k+1)}, 2^{-k}]$$ for $k\geq 0$ and $m=0,1,\dots, 2^k-1$. Extend $\varphi$ to $\partial R_k^m$ such that $\varphi$ is a constant map on each vertical boundary segment. Then $\varphi|_{\partial R_k^m}$ is Lipschitz with constant at most $2^kb'_k$, where $b'_k:=4\lambda(2b_{k+1} + b_k)$. Since $R_k^m$ is isometric to a $2^{-k}$-scaled copy of $R_0^0$, which is in turn biLipschitz equivalent to $\overline{D}$, it follows from the $\alpha$-H\"older $1$-connectedness that $\varphi|_{\partial R_k^m}$ extends to a $(2^{k\alpha}L'b'_k, \alpha)$-H\"older map on $R_k^m$, again denoted by $\varphi$. Here, $L'$ is a constant only depending on the constant appearing in the definition of $\alpha$-H\"older $1$-connectedness. This defines $\varphi$ on all of $Q$. Note that $$d(\varphi(z), \varphi(a_k^m)) \leq 2^\alpha L'b'_k$$ for every $z\in R_k^m$ and all $k$ and $m$. Since $c$ is continuous and thus $b'_k\to 0$ as $k\to\infty$ it follows from the inequality above that $\varphi$ is continuous at every point $(s,0)$ with $s\in [0,1]$. Since $\varphi$ is continuous on $Q\setminus ([0,1]\times\{0\})$ by construction the proof is complete.
\end{proof}

In the setting of quasi-convex metric spaces, $\alpha$-H\"older $1$-connectedness is stable under taking ultralimits. We will not need this fact and thus leave the proof to the reader.

We turn to the relationship between a quadratic isoperimetric inequality and H\"older extendability of curves. As a direct consequence of Theorems~\ref{thm:stability-isop-ultralimit} and \ref{thm:good-filling} and Propositions~\ref{prop:super-critical-Sobolev} and \ref{prop:Hoelder-1-conn-simply-conn} we obtain:

\bc\label{cor:simply-conn-Hoelder-1-conn}
 Let $X$ be a locally compact, geodesic metric space admitting a quadratic isoperimetric inequality, and let $X_\omega$ be an asymptotic cone of $X$. Then $X$ and $X_\omega$ are $\alpha_0$-H\"older $1$-connected for some $\alpha_0\in(0,1)$ depending only on the isoperimetric constant of $X$. In particular, $X$ and $X_\omega$ are simply connected.
\ec

The following result is the link between H\"older $1$-connectedness and the H\"older extension property defined in the introduction.

\bt\label{thm:hoelder-ext-prop-Hoelder-1-conn}
 Let $X$ be a complete quasi-convex metric space and $0<\alpha\leq 1$. Then $X$ is $\alpha$-H\"older $1$-connected if and only if the pair $(\R^2, X)$ has the $\alpha$-H\"older extension property.
\et

The equivalence is quantitative in the following sense. If one of the two statement holds with some constant $L\geq 1$ then the other statement holds with a constant $L'\leq M\lambda L$, where $\lambda$ is the quasi-convexity constant of $X$ and $M$ is a universal constant.
The theorem is well-known in the case $\alpha=1$, see \cite{Alm62}. We refer to the Appendix below for the proof, which is an adaptation of the arguments in \cite{Alm62}.

We conclude this section with the following result.

\bt\label{thm:isop-Hoelder-ext}
Let $X$ be a locally compact, geodesic metric space admitting a quadratic soperimetric inequality or an ultralimit of a sequence of such spaces (with uniformly bounded isoperimetric constants).
Then there exist $\alpha_0>0$ and $L\geq 1$ such that every $(\nu, \alpha)$-H\"older map $\varphi\colon A\to X$ with $\alpha\leq \alpha_0$ and $A\subset\R^2$ extends to a map $\bar{\varphi}\colon \R^2\to X$ with the following properties:
 \begin{enumerate}
  \item $\bar{\varphi}$ is $(L\nu, \alpha)$-H\"older continuous on $\R^2$.
  \item $\bar{\varphi}$ sends compact subsets of $\Omega:=\R^2\setminus \overline{A}$ to sets of finite $\hm^2$-measure.
   \item $\bar{\varphi}$ sends subsets of $\Omega$  of Lebesgue measure zero to sets of $\hm^2$-measure zero.
%  \item If $0<\beta<2$ and $B\subset \Omega$ has $\sigma$-finite $\hm^\beta$-measure then $\bar{\varphi}(B)$ has zero $\hm^\rho$-measure, where $$\rho = \frac{2\beta}{\beta + \alpha_0\cdot (2-\beta)}.$$
 \end{enumerate}
\et

The numbers $\alpha_0$ and $L$ only depend on the isoperimetric constant. If $X$ has isoperimetric constant $C=\frac{1}{4\pi}$ then we may choose $\alpha_0=1$ and $L=1$ by \cite{LW-isoperimetric} and \cite{LaS97}. Note that properties (ii) and (iii) are in general not shared by $(1-\varepsilon)$-H\"older maps, no matter how small $\varepsilon>0$. We mention that Corollary~\ref{cor:Lip-asymp-simply-connected} for $0<\alpha\leq \alpha_0$ is also a consequence of Theorem~\ref{thm:isop-Hoelder-ext}.

\begin{proof}
We begin with the following observation. Let $c\colon S^1\to X$ be a $\nu$-Lipschitz curve. By Theorems~\ref{thm:stability-isop-ultralimit} and \ref{thm:good-filling} there exists $u\in W^{1,p}(D, X)$ with $\trace(u) = c$ and such that $[E_+^p(u)]^{\frac{1}{p}}\leq C \nu$ for some $C$ and $p>2$ only depending on the isoperimetric constant of $X$. Set $\alpha_0:= 1-\frac{2}{p}$. By Proposition~\ref{prop:super-critical-Sobolev} there exists a representative $\varphi\colon \overline{D}\to X$ of $u$ which is $(M\nu, \alpha_0)$-H\"older continuous on all of $\overline{D}$ with $M$ only depending on the isoperimetric constant of $X$. Moreover, $\varphi$ satisfies Lusin's property (N). By this property and the area formula \cite{Kir94}, the Hausdorff $2$-measure of $\varphi(\overline{D})$ is finite. Finally, $\varphi$ is $(2M\nu, \alpha)$-H\"older continuous for every $0<\alpha\leq \alpha_0$.
 
Now, let $\alpha_0$ and $M$ be as above and fix $0<\alpha\leq \alpha_0$.  Let $A\subset\R^2$ be non-empty and $\varphi\colon A\to X$ a $(\nu, \alpha)$-H\"older map. Since $X$ is complete we may assume that $A$ is closed. Let $\varphi\colon \R^2\to X$ be the map constructed in the proof of Theorem~\ref{thm:hoelder-ext-prop-Hoelder-1-conn}, where $\varphi|_Q$ is a map given as in the observation above for every cube $Q$ in the Whitney cube decomposition of $\R^2\setminus A$. Then $\varphi$ is $(L\nu, \alpha)$-H\"older continuous for some $L$ depending only on $M$ by (the proof of) Theorem~\ref{thm:hoelder-ext-prop-Hoelder-1-conn}. Moreover, $\varphi$ satisfies properties (ii) and (iii) in the statement of the theorem because each $\varphi|_Q$ satisfies these properties for every $Q$ by the observation above. This completes the proof of the theorem.
\end{proof}

\section{H\"older exponent arbitrarily close to one}\label{sec:Hoelder-close-to-one}

The aim of this section is to prove the following result which, together with the results of Sections~\ref{sec:Hoelder-Sobolev} and \ref{sec:bound-filling-mesh}, yields Theorem~\ref{thm:Lip-Hoelder-ext-best-exponent} and Corollary~\ref{cor:Lip-asymp-simply-connected} and their generalizations.

\bt\label{thm:mesh-fct-Hoelder-ext}
Let $X$ be a locally compact, geodesic metric space.  Suppose that there are $C>0$ such that $\delta_X(r)\le Cr^2$ for all $r>0$.  Then, for any $\alpha\in (0,1)$, $X$ is $\alpha$-H\"older $1$-connected, with multiplicative constant depending only on $C$ and $\alpha$.
\et

Theorem~\ref{thm:hoelder-ext-prop-Hoelder-1-conn} then implies that the pair $(\R^2, X)$ has the $\alpha$-H\"older extension property, and Corollary~\ref{cor:simply-conn-Hoelder-1-conn} implies that $X$ is simply connected.  Because the multiplicative constant depends only on $C$ and $\alpha$, Corollary~\ref{cor:Hoelder-ext-property-ultralimit} implies the following corollary.

\bc\label{cor:asymp-simply-connected}
 Let $X$ be a locally compact, geodesic metric space admitting a quadratic isoperimetric inequality. Then every asymptotic cone $X_\omega$ of $X$ is simply connected and the pair $(\R^2, X_\omega)$ has the $\alpha$-H\"older extension property for every $\alpha\in(0,1)$.
\ec

We prove Theorem~\ref{thm:mesh-fct-Hoelder-ext} by using the triangulations produced by Theorem~\ref{thm:triangulation-isop-ineq} to repeatedly subdivide a curve into smaller curves.
If $X$ is a length space, we say that $X$ is $(n,k)$--triangulable if for all $r>0$, we have $\Ar_{X,r/k}(r)\le n$.  If $X$ admits a quadratic isoperimetric inequality, then Theorem~\ref{thm:triangulation-isop-ineq} implies that there is a $K$ such that $X$ is $(Kn^2,n)$--triangulable for all $n\ge 1$.
\bl\label{lem:LipschitzSubdiv}
Let $B=[0,1]^2$ be the unit square.  Let $n>0$, $k>1$.  Divide $B$ into an $n\times n$ grid of squares of side length $n^{-1}$.  Let $B_1,\dots, B_{n^2}$ be squares of side length $(2n)^{-1}$, one centered in each of the grid squares, and let $E$ be the closure of $B\setminus \bigcup_i B_i$.

There is an $L>0$ depending on $n$ such that if $X$ is a space that is $(n^2,k)$--triangulable and $c\colon \partial B\to X$ is a constant-speed Lipschitz curve, then there is a map $f\colon  E\to X$ such that:
\begin{enumerate}
\item $f$ agrees with $c$ on $\partial B$.
\item For each $i$, the restriction $f|_{\partial B_i}$ is a constant-speed curve and
  $$\length(f|_{\partial B_i})\le \frac{\length(c)}{k}.$$
\item $\lip (f)\le L \lip (c)$.
\end{enumerate}
\el

\begin{proof}
Let $c\colon \partial B\to X$ be a constant-speed Lipschitz curve.  Since $X$ is $(n^2,k)$--triangulable, there is a triangulation $\tau$ of the closed disc $\overline{D}$ with at most $n^2$ triangles, and a continuous map $P\colon\tau^{(1)}\to X$ such that $P|_{S^1} = c$ and $\length(P|_{\partial F})\leq \frac{\length(c)}{k}$ for every triangle $F$ in $\tau$.  We equip $\tau$ with a metric so that each face is isometric to a equilateral triangle with unit sides and assume that $P$ parameterizes each edge with constant speed, so that $\lip(P)\le \length(c)$.  Denote the faces of $\tau$ by $F_1,\dots, F_j$.  We will construct $f$ by finding a map $v\colon E\to \tau^{(1)}$ that sends $\partial B$ to $\partial \tau$ and sends $\partial B_i$ to $\partial F_i$ for all $i\le j$.

For all $i$, let $x_i$ be the center of the square $B_i$.  For $1\le i\le j$, let $y_i$ be the center of the face $F_i$, and for $i>j$, let $y_i$ be a point in the interior of $\tau$, chosen so that the $y_i$ are all distinct.  Let $q\colon B\to B$ be a Lipschitz map that fixes the first $j$ grid squares of $B$ and collapses $\partial B_i$ to $x_i$ for $i>j$.  Let $g\colon B\to \tau$ be a Lipschitz homeomorphism such that for all $i$, $g(x_i)=y_i$ and $g(\partial B_i)$ is a small circle around $y_i$.  Then $g\circ q(E)$ avoids the center of each face of $\tau$.

Let 
$$r\colon  \tau \setminus \{y_1,\dots, y_j\}\to \tau^{(1)}$$
be the retraction such that if $x\in F_i$, then $f(x)$ is the intersection of the ray from $y_i$ to $x$ with the boundary $\partial F_i$.  This map is locally Lipschitz, so $v=r\circ g\circ q$ is a Lipschitz map from $E$ to $\tau^{(1)}$ such that $v(\partial B_i)=\partial F_i$ for all $i\le j$ and $v(\partial B_i)$ is a point for all $i>j$.

The Lipschitz constant of $v$ depends on $\tau$, but there are only finitely many possible triangulations $\tau$ with at most $n^2$ faces.  Each of these gives rise to a map $v_\tau$, so if $L_0=L_0(n)=\max_\tau \lip(v_\tau)$, then $\lip(v)\le L_0$.  Let $f_0=P\circ v$.  Then $\lip(f_0)\le L_0 \lip(P)\le L_0\length(c)$.

This map sends $\partial B$ to $c$ and $\partial B_i$ to either a curve of length at most $\frac{\length(c)}{k}$ or a point, but the parameterizations of the boundary components may not have constant speed.  Let $E'$ be the space $E$ with an annulus of length $\frac{1}{n}$ glued to each boundary component $\partial B_i$ and an annulus of length $1$ glued to $\partial B$.  This is biLipschitz equivalent to $E$, say by an equivalence $h\colon E'\to E$.  Define $f\colon E\to X$ so that $f|_{\partial B}=c$, $f|_{\partial B_i}$ is a constant-speed parameterization of $f_0|_{\partial B_i}$, and $f(h(x))=f_0(x)$ for all $x\in E$.  This defines $f$ on all of $E$ except for an annulus around each boundary component, and we use Lemma~\ref{lem:annulusReparam} to define $f$ on these annuli.  Then $f$ satisfies the desired conditions on each boundary component and there is a $L_1>0$ such that $\lip(f)\le L_1\lip(f_0)\le L_1L_0\length(c)$, as desired.
\end{proof}

Now, we can prove the desired H\"older extension property.
\bp\label{prop:subdivHolder}
If $X$ is $(n^2,k)$--triangulable, then for any Lipschitz curve $c\colon  S^1\to X$, there is an extension $\beta\colon  \overline{D}\to X$ such that $\beta$ is $\left(C_H \lip(c), \frac{\log k}{\log 2n}\right)$-H\"older continuous, where $C_H$ depends only on $n$ and $k$
\ep
\begin{proof}
We identify $S^1$ with the boundary $\partial B$, where $B=[0,1]^2$.  Let $E^0$ be $B$ with $n^2$ holes of side length $(2n)^{-1}$ removed, as in Lemma~\ref{lem:LipschitzSubdiv}.  We denote these holes by $B^1_1,\dots B^1_{n^2}$.

Let $c\colon  \partial B\to X$ be a Lipschitz curve. By Lemma~\ref{lem:annulusReparam}, we may assume that $c$ is parameterized with constant speed.  Let $\beta^0\colon  E^0\to X$ be a map satisfying Lemma~\ref{lem:LipschitzSubdiv}.  The curves $c_i=\beta^0|_{\partial B^1_i}$ are Lipschitz curves with $\length(c_i)\le k^{-1}\length(c).$
Since $c_i$ is a constant-speed curve, we have $\lip (c_i)\le 2n k^{-1}\length(c).$
For $i=1,\dots, n^2$, let $E^1_i$ be a copy of $E^0$, scaled to fit inside $B^1_i$.  We can apply Lemma~\ref{lem:LipschitzSubdiv} to each curve $c_i$ to obtain maps $\beta^1_i\colon  E^1_i\to X$ that agree with $\beta^0$ on $\partial B^1_i$ and satisfy
$$\lip (\beta^1_i)\le  L\lip(c_i)\le 2n Lk^{-1}\length(c).$$
Let $E^1=E^0\cup \bigcup_i E^1_i$ and let $\beta^1\colon  E^1\to X$ be the map that extends $\beta^0$ and all of the $\beta^1_i$'s.

Now, $E^1$ has $n^4$ smaller square holes, which we denote
$B^2_1,\dots B^2_{n^4}$.  We can repeat the process to produce
$E^2_i$, $\beta^2_i$, etc.  In fact, for each $j>0$ and
$i=1,\dots, n^{2j}$, there are $B^j_i$, $E^j_i$, and
$\beta^j_i\colon  E^j_i\to X$ with the following properties:
\begin{enumerate}
\item The set $B^j_{i}$ is one of the holes in
  $E^{j-1}_{\lceil i/n^2\rceil}$; it has side length $(2n)^{-j}$.
\item $E^j_i$ is a copy of $E^0$, scaled to fit in $B^j_i$.
\item $\beta^j_i\colon  E^j_i \to X$ is a map such that $\beta^j_i$
  agrees with $\beta^{j-1}_{\lceil i/n^2\rceil}$ on $\partial B^j_i$.
\item $\length(\beta^j_i|_{\partial B^j_i})\le  k^{-j}\length(c).$
\item \label{enum:LipschitzBeta} 
  $\lip (\beta^j_i) \le (2n)^j L  k^{-j}\length(c).$
\end{enumerate}
Let $E^m=\bigcup_{j=1}^m \bigcup_{i=1}^{n^{2j}} E^j_i$ and let
$E^\infty=\bigcup_{j=1}^\infty E^j$.  Then $B\setminus E^\infty$ is a Cantor set of measure zero.  Let $\beta^j\colon E^j\to X$ be the common extension of the $\beta^m_i$ for $m\le j$.  By (\ref{enum:LipschitzBeta}), $\lip (\beta^j) \le (2n)^j L  k^{-j}\length(c).$  Let $\beta^\infty\colon  E^\infty\to X$ be the common extension of the maps $\beta^j$.  Then $\beta^\infty$ is an extension of $c$, and we claim that $\beta^\infty$ is H\"older continuous.

  Suppose that $x\in E^\infty$.  For each $j$, let $x_j\in E^j$ be a
  point such that $|x-x_j|=\dist(x,E^j)$.  The holes in $E^j$ have side
  length $(2n)^{-j-1}$, so $|x - x_j|\le (2n)^{-j-1}$.  Then
  \begin{align*}
    d(\beta^{\infty}(x),\beta^{\infty}(x_j))
    &\le \sum_{i=j}^{\infty}\lip(\beta^{i+1})\cdot d(x_{i},x_{i+1})\\
    &\le \sum_{i=j}^{\infty} (2n)^{i+1} L k^{-i-1}\length(c) \cdot
      [(2n)^{-i-1}+(2n)^{-i-2}]\\
    &\le \frac{1+(2n)^{-1}}{1-k^{-1}}\cdot L k^{-(j+1)}\length(c)\\
    &\le L' k^{-j}\length(c),
  \end{align*}
  where $L'$ is a constant depending on $n$ and $k$.

  Suppose that $u,v\in E^\infty$, and let $m\ge 0$ be such that
  $$\sqrt{2}(2n)^{-m-1}\le |u - v|\le \sqrt{2}(2n)^{-m}.$$
  Let $u_m, v_m\in E^m$ be the nearest-point projections of $u$
  and $v$.  Then
  $$|u_m - v_m|\le \sqrt{2}(2n)^{-m}+2(2n)^{-m-1}\le 3(2n)^{-m}$$
  and
  \begin{align*}
    d(\beta^{\infty}(u),\beta^{\infty}(v))
    &\le 2 L' \length(c)k^{-m}+d(\beta^{\infty}(u_m),\beta^{\infty}(v_m)) \\ 
    &\le 2 L' \length(c)k^{-m}+3\lip(\beta^m) (2n)^{-m} \\
    &\le 2 L' \length(c)k^{-m}+3L \length(c) k^{-m}\\
    &\le (2 L'+3L)\cdot \length(c)\cdot k^{-m} \\
    &\le C_H \length(c)\cdot |u - v|^{\frac{\log k}{\log 2n}}
  \end{align*}
  where $C_H$ depends on $n$ and $k$.  Therefore, $\beta^{\infty}$ is
  $\frac{\log k}{\log 2n}$-H\"older continuous, and so there exists a unique $\frac{\log k}{\log 2n}$-H\"older continuous map $\beta\colon  B\to X$ that extends $c$.
\end{proof}

\begin{proof}[Proof of Theorem~\ref{thm:mesh-fct-Hoelder-ext}]
Let $\alpha\in (0,1)$.  By Theorem~\ref{thm:triangulation-isop-ineq}, there is a $K$ depending only on $C$ such that $X$ is $(Kn^2,n)$--triangulable for all $n\ge 1$.  We have
$$\lim_{n\to \infty}\frac{\log n}{\log 2\sqrt{Kn^2}}=1,$$
so there is an $n$ such that $\frac{\log n}{\log 2\sqrt{Kn^2}}>\alpha$.  By Proposition~\ref{prop:subdivHolder}, $X$ is $\frac{\log n}{\log 2\sqrt{Kn^2}}$--Hölder $1$--connected with a constant depending only on $K$ and $n$.
\end{proof}

\section{Appendix: Proof of Theorem~\ref{thm:hoelder-ext-prop-Hoelder-1-conn}}\label{sec:appendix}

In this section, we prove Theorem~\ref{thm:hoelder-ext-prop-Hoelder-1-conn}. The proof is a variation of the proof of Theorem (1.2) in \cite{Alm62}, which is the special case of our theorem for $\alpha=1$. The construction of our H\"older continuous extension is exactly as in \cite{Alm62}, while the proof of the H\"older continuity is slightly more technical than the proof of the Lipschitz continuity in \cite{Alm62}. 

The proof of the theorem will moreover show the following. If $X$ only satisfies the $\alpha$-H\"older $1$-connectedness condition for Lipschitz curves with Lipschitz constant at most some number $\nu_0$ then $(\nu, \alpha)$-H\"older maps $\varphi\colon A\to X$ with $A\subset\R^2$ and $\nu>0$ arbitrary can be extended to $(ML\nu, \alpha)$-H\"older maps defined on the neighborhood $$\left\{z\in \R^2\relmiddle| \dist(z, A)< \left(\frac{M\nu_0}{\lambda\nu}\right)^{\frac{1}{\alpha}}\right\}$$ of $A$, where $\lambda$ is the quasi-convexity constant and $M$ is a universal constant.

\begin{proof}[Proof of Theorem~\ref{thm:hoelder-ext-prop-Hoelder-1-conn}]
 Suppose first that the pair $(\R^2, X)$ has the $\alpha$-H\"older extension property with some constant $L$.  Let $c\colon S^1\to X$ be a $\nu$-Lipschitz curve. Then $c$ is $(2\nu, \alpha)$-H\"older and thus, by the H\"older extension property, there exists an $(2\nu L, \alpha)$-H\"older extension of $c$ defined on all of $\R^2$. It follows that $X$ is $\alpha$-H\"older $1$-connected with constant $L'=2L$.
 
 Now, suppose that $X$ is $\alpha$-H\"older $1$-connected with some constant $L\geq 1$.  In what follows, $M$ will denote a suitable universal constant, which might change with each occurrence in the text. Denote by $\lambda$ the quasi-convexity constant of $X$. Let $A\subset\R^2$ and let $\varphi\colon A\to X$ be $(\nu, \alpha)$-H\"older continuous for some $\nu>0$. Since $X$ is complete we may assume that $A$ is closed. Let $\mathcal{Q}$ be a Whitney cube decomposition of the complement $A^c$ of $A$, see Theorem VI.1.1 in \cite{Ste70}. Thus, $\mathcal{Q}$ is a collection of pairwise almost disjoint closed squares $Q\subset A^c$ of the form $$Q=[p,p+2^k]\times [q, q+2^k]$$ with $p,q\in 2^k\Z$ and $k\in\Z$ such that $A^c= \cup_{Q\in\mathcal{Q}}Q$ and 
 \begin{equation}\label{eq:property-Whitney-cube-ineq}
  4^{-1} \diam(Q)\leq \dist(A, Q)\leq 4\diam(Q)
 \end{equation}
 for every $Q\in \mathcal{Q}$. We first observe that if $Q,Q'\in\mathcal{Q}$ have non-trivial intersection then $\diam(Q)\leq 20 \diam(Q')$ by \eqref{eq:property-Whitney-cube-ineq}. Moreover, either $Q\cap Q'$ consists of a single point or it contains an entire edge of $Q$ or $Q'$.  
Denote by $K_0$ the set of vertices and by $K_1$ the union of edges of squares in $\mathcal{Q}$. An edge of a square in $\mathcal{Q}$ will be called minimal edge in $K_1$ if it does not contain any shorter edges of squares in $\mathcal{Q}$. Note that for every $Q\in \mathcal{Q}$ and every minimal edge $I$ in $K_1$ with $I\subset \partial Q$ we have 
\begin{equation}\label{eq:min-edge-long}
 \diam(I)\geq (\sqrt{2}\cdot 20)^{-1}\diam(Q)
\end{equation}
 by the above.

We extend $\varphi$ first to points in $K_0$ as follows. For each $x\in K_0$ choose $r_0(x)\in A$ such that $$|x-r_0(x)| = \dist(x, A)$$ and set $\varphi(x):= \varphi\circ r_0(x)$. We next extend $\varphi|_{K_0}$ to $K_1$ using the quasi-convexity of $X$. For this, let $I\subset K_1$ be a minimal edge in the sense above and let $a$ and $b$ denote the endpoints of $I$. It follows from \eqref{eq:property-Whitney-cube-ineq} that $$|r_0(a) - r_0(b)|\leq M|a-b|$$ and hence $$d(\varphi(a), \varphi(b)) \leq \nu |r_0(a) - r_0(b)|^\alpha \leq \nu M |a-b|^\alpha.$$ Thus, $\varphi$ extends to some $(M\lambda\nu  \diam(I)^{\alpha-1})$-Lipschitz map on $I$ by the $\lambda$-quasi-convexity of $X$. Since $K_1$ is the union of minimal edges in $K_1$ this defines $\varphi$ on all of $K_1$. We finally extend $\varphi|_{K_1}$ to $A^c$ using the $\alpha$-H\"older $1$-connectedness. For this, let $Q\in \mathcal{Q}$. It follows with \eqref{eq:min-edge-long} that $\varphi|_{\partial Q}$ is $(M\lambda\nu \diam(Q)^{\alpha-1})$-Lipschitz. Hence $\varphi|_{\partial Q}$ extends to some $(ML\lambda\nu, \alpha)$-H\"older map defined on $Q$. Since $Q$ was arbitrary this defines $\varphi$ on all of $A^c$ and hence on all of $\R^2$.

It remains to show that $\varphi$ is $(ML\lambda\nu,\alpha)$-H\"older continuous on all of $\R^2$. Let $x,y\in\R^2$. We first assume that $x\in A$ and $y\in A^c$. Let $Q\in\mathcal{Q}$ be such that $y\in Q$ and let $y'$ be a vertex of $Q$. Then $$|y-y'|\leq \diam(Q)\leq 4\dist(Q,A)\leq 4|x-y|$$ and hence 
\begin{equation*}
 \begin{split}
  |r_0(y') - x|&\leq |r_0(y') - y'| + |y'-y| + |y-x|\\
  &\leq \dist(y', A) + 5|y-x|\\
  &\leq \dist(Q, A) + \diam(Q) + 5|y-x|\\
  &\leq 5\dist(Q,A) + 5|y-x|\\
  &\leq 10|y-x|.
 \end{split}
\end{equation*}
From this we infer that 
\begin{equation*}
\begin{split}
 d(\varphi(x), \varphi(y)) &\leq d(\varphi(x), \varphi(y')) + d(\varphi(y'), \varphi(y))\\
 &\leq \nu|x-r_0(y')|^\alpha + ML\lambda\nu |y'-y|^\alpha\\
 &\leq ML\lambda\nu |x-y|^\alpha.
\end{split}
\end{equation*}
We now assume that $x,y\in A^c$. Let $Q,Q'\in\mathcal{Q}$ be such that $x\in Q$ and $y\in Q'$ and let $x'$ and $y'$ be vertices of $Q$ and $Q'$, respectively. We distinguish two cases and first consider the case that 
\begin{equation*}
|x-y|\geq \frac{1}{30}\max\{\dist(Q,A), \dist(Q',A)\}.
\end{equation*}
From this and \eqref{eq:property-Whitney-cube-ineq} we infer that $|x-x'|\leq 120|x-y|$ and $|y-y'|\leq 120|x-y|$ and moreover
\begin{equation*}
  |r_0(x') - r_0(y')| \leq M |x-y|.
\end{equation*}
It follows that
 \begin{equation*}
  \begin{split}
   d(\varphi(x), \varphi(y)) &\leq d(\varphi(x), \varphi(x')) + d(\varphi(x'), \varphi(y')) + d(\varphi(y'), \varphi(y))\\
    &\leq ML\lambda\nu (|x-x'|^\alpha + |y-y'|^\alpha) +  \nu|r_0(x') - r_0(y')|^\alpha\\
    &\leq ML\lambda\nu |x-y|^\alpha,
  \end{split}
 \end{equation*}
which proves the first case. We now consider the case that 
 \begin{equation*}
|x-y|< \frac{1}{30}\max\{\dist(Q,A), \dist(Q',A)\}.
\end{equation*}
We may assume without loss of generality that $x\not=y$ and $\dist(Q',A)\leq \dist(Q,A)$. Denote by $Z$ the square of side length $2|x-y|$ centered at $x$. We claim that at most $4$ squares from $\mathcal{Q}$ intersect $Z$. Indeed, if $Q''\in\mathcal{Q}$ intersects $Z$ in some point $z$ then $$\diam(Q'')\geq \frac{1}{4}\dist(Q'', A)\geq  \frac{1}{4}(\dist(z,A) - \diam(Q''))$$ and thus $\diam(Q'')\geq \frac{1}{5}\dist(z,A)$. Since $$\dist(z,A)\geq \dist(x,A) - |x-z|\geq \dist (Q,A)-|x-z|> (30-\sqrt{2})\cdot |x-y|$$ it follows that $\diam(Q'')\geq 5^{-1}(30 -\sqrt{2})\cdot |x-y|$ and hence the side length of $Q''$ is strictly larger than the side length of $Z$. This implies that $Z$ intersects at most $4$ squares from $\mathcal{Q}$, as claimed. It follows that the straight segment from $x$ to $y$ intersects at most $4$ squares from $\mathcal{Q}$. Since $\varphi$ is $(ML\lambda\nu, \alpha)$-H\"older on each such square it follows that $$d(\varphi(x), \varphi(y))\leq 4ML\lambda\nu |x-y|^\alpha.$$ This completes the proof.
\end{proof}

\def\cprime{$'$} \def\cprime{$'$} \def\cprime{$'$}

\end{document}